 \renewcommand{\epsilon}{\varepsilon}
\newtheorem{theorem}{Theorem}[section]
 \newtheorem{lemma}[theorem]{Lemma}
 \newtheorem{proposition}[theorem]{proposition}
\newtheorem{deff}[theorem]{Definition}
 \newtheorem{rem}[theorem]{Remark}
 \newcommand{\bth}{\begin{theorem}}
 \newcommand{\ble}{\begin{lemma}}
 \newcommand{\bcor}{\begin{corr}}
 \newcommand{\bdeff}{\begin{deff}}
 \newcommand{\bprop}{\begin{proposition}}
 \newcommand{\ele}{\end{lemma}}
 \newcommand{\ecor}{\end{corr}}
 \newcommand{\edeff}{\end{deff}}
 \newcommand{\eprop}{\end{proposition}}
 \renewcommand{\Pi}{\varPi}
 \renewcommand{\epsilon}{\varepsilon}
\numberwithin{equation}{section}
\thanks{The authors were supported  by National Science Foundation of China(No.11771103) and  Guangxi Natural Science Foundation(No.2017GXNSFFA198017).}
\title
[Nonlinear second boundary conditions ]{On the second boundary value problem for special Lagrangian curvature potential  equation}
 \author{Sitong Li}
\address{School of Mathematics and Statistics, Guangxi Normal University,
Guilin, Guangxi 541004, People's Republic of China}
\email{2019011182@stu.gxnu.edu.cn}
\author{Rongli Huang}
\address{School of Mathematics and Statistics, Guangxi Normal University,
Guilin, Guangxi 541004, People's Republic of China}
\email{ronglihuangmath@gxnu.edu.cn}
\date{}
\begin{document}
\maketitle
\begin{abstract}
This is a sequel to \cite{HO} and \cite{CHB}, which study the second boundary  problem for special Lagrangian curvature potential equation. As consequences, we obtain the existence and uniqueness of the smooth
uniformly convex solution by the method of continuity through carrying out a-priori estimate on the solutions.

\end{abstract}

\let\thefootnote\relax\footnote{
2010 \textit{Mathematics Subject Classification}. Primary 53C44; Secondary 53A10.

\textit{Keywords and phrases}. Special Lagrangian curvature operator; Boundary defining function; Legendre transformation.}

\section{Introduction}

The first author and his coauthors  were concerned with
 special Lagrangian  equations
\begin{equation}\label{e1.1}
F_\tau(\lambda(D^2 u))=c,\quad
 \mathrm{in}\quad \Omega,
\end{equation}
associated with the second boundary value problem
\begin{equation}\label{e1.2}
Du(\Omega)=\tilde{\Omega},
\end{equation}
for given $F_{\tau}$,  $\Omega$  and  $\tilde{\Omega}$. Specifically, $\Omega$, $\tilde{\Omega}$  are uniformly convex bounded
domains with smooth boundary in $\mathbb{R}^{n}$ and
\begin{equation*}
F_{\tau}(\lambda):=\left\{ \begin{aligned}
&\frac{1}{2}\sum_{i=1}^n\ln\lambda_{i}, &&\tau=0, \\
& \frac{\sqrt{a^2+1}}{2b}\sum_{i=1}^n\ln\frac{\lambda_{i}+a-b}{\lambda_{i}+a+b},  &&0<\tau<\frac{\pi}{4},\\
& -\sqrt{2}\sum_{i=1}^n\frac{1}{1+\lambda_{i}}, &&\tau=\frac{\pi}{4},\\
& \frac{\sqrt{a^2+1}}{b}\sum_{i=1}^n\arctan\frac{\lambda_{i}+a-b}{\lambda_{i}+a+b},  \ \ &&\frac{\pi}{4}<\tau<\frac{\pi}{2},\\
& \sum_{i=1}^n\arctan\lambda_{i}, &&\tau=\frac{\pi}{2}.
\end{aligned} \right.
\end{equation*}
Here $a=\cot \tau$, $b=\sqrt{|\cot^2\tau-1|}$,  $x=(x_{1},x_{2},\cdots,x_{n})$, $u=u(x)$ and $\lambda(D^2 u)=(\lambda_1,\cdots, \lambda_n)$ are the eigenvalues of Hessian matrix $D^2 u$ according to $x$. The equations (\ref{e1.1}) comes from \cite{MW}, where Warren established the calibration theory for spacelike Lagrangian submanifolds in $(\mathbb R^{2n},g_\tau)$ (for general calibration theory for Lagrangian submanifolds in Riemannain and pseudo-Riemannain manifolds, see Harvey-Lawson \cite{HL} and Mealy \cite{Mealy}).

We present here the main results regarding the sovability of solutions to (\ref{e1.1})-(\ref{e1.2}).
 Brendle and Warren \cite{SM} proved the existence and uniqueness of the solution by the elliptic method in the case of $\tau=\frac{\pi}{2}$.
 The first author \cite{HR} considered the longtime existence and convergence of the solutions to the second boundary problem of  Lagrangian mean curvature flow and also obtained existence results in this case. As far as $\tau=0$ is concerned, the equation (\ref{e1.1}) can be transformed into  Monge-Amp\`{e}re equation. Some results had already been given in the literature for the second boundary value problem of Monge-Amp\`{e}re equation.
 Delano\"{e} \cite{P} obtained a unique smooth solution in dimension 2 if both domains are uniformly convex. Later his result was extended to any dimension by Caffarelli \cite{L} and Urbas \cite{JU}. Using the parabolic method, Schn\"{u}rer and Smoczyk \cite{OK} also arrived at  the existence of solutions to (\ref{e1.1}) for $\tau=0$. For further studies on  the rest of $0\leq\tau\leq\frac{\pi}{2}$, see  \cite{HO}, \cite{HRY}   \cite{CHY} and the references therein.

The aim of this paper is to study special Lagrangian curvature potential equation
\begin{equation}\label{e1.3}
\sum_{i=1}^n\arctan\kappa_{i}=c,\quad
 \mathrm{in}\quad \Omega,
\end{equation}
in conjunction with the  second boundary value problem
\begin{equation}\label{e1.4}
Du(\Omega)=\tilde{\Omega},
\end{equation}
where $(\kappa_1,\cdots, \kappa_n)$ are the principal curvatures of the graph $\Gamma=\{(x,u(x))\mid x\in \Omega\}$
and $c$ is a constant to be determined, and $\Omega$, $\tilde{\Omega}$  are uniformly convex bounded
domains with smooth boundary in $\mathbb{R}^{n}$. It shows that a very nice geometric interpretation of special Lagrangian curvature operator
$F(\kappa_{1},\cdots, \kappa_{n})\triangleq\sum_{i=1}^n\arctan\kappa_{i}$ in \cite{G}.
The background of the so-call special Lagrangian curvature potential  equation (\ref{e1.3}) can be seen in the literature \cite{G} \cite{HL1}.

Now we state our main theorem.

\begin{theorem}\label{t1.1}
 Suppose that  $\Omega$, $\tilde{\Omega}$  are uniformly convex bounded
domains with smooth boundary in $\mathbb{R}^{n}$. Then there exist a uniformly convex solution $u\in C^{\infty}(\bar{\Omega})$ and a unique constant $c$ solving (\ref{e1.3})-(\ref{e1.4}), and $u$ is unique up to a constant.
\end{theorem}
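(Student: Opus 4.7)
The plan is to prove Theorem \ref{t1.1} by the method of continuity, following the framework developed in \cite{HO} and \cite{CHB} for the Hessian-based special Lagrangian equations. I would deform the target domain through a smooth family of uniformly convex domains $\tilde{\Omega}_t$, $t\in[0,1]$, joining a simple reference domain $\tilde{\Omega}_0$ (for example, a ball on which an explicit radial solution is available) to $\tilde{\Omega}_1=\tilde{\Omega}$, and consider the set $T\subset[0,1]$ of parameters for which there exists a uniformly convex pair $(u_t,c_t)\in C^\infty(\bar\Omega)\times\mathbb{R}$ with $\sum_i\arctan\kappa_i[u_t]=c_t$ in $\Omega$ and $Du_t(\Omega)=\tilde{\Omega}_t$. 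The goal is to show $T$ is nonempty, open, and closed, hence equals $[0,1]$.

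To make the boundary condition usable, write $\partial\tilde{\Omega}_t=\{h_t=0\}$ for a smooth uniformly concave defining function, so the condition $Du(\Omega)=\tilde\Omega_t$ becomes the oblique boundary condition $h_t(Du(x))=0$ on $\partial\Omega$. The $C^1$ bound is immediate from $Du(\Omega)\subset\tilde{\Omega}_t$, the $C^0$ bound follows from the maximum principle up to an additive constant, and the constant $c_t$ is pinned down and controlled by integrating the divergence form of the equation against volume data determined by $\tilde\Omega_t$. Once a global $C^2$ bound is in hand, the operator $F(\kappa)=\sum\arctan\kappa_i$ is concave and uniformly elliptic on the convex cone, so Evans--Krylov together with the concavity of $F$ yields $C^{2,\alpha}(\bar\Omega)$, and Schauder bootstrapping promotes this to $C^\infty(\bar\Omega)$.

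The main obstacle is the boundary $C^2$ estimate, which is genuinely harder than in the Hessian-based case because the $\kappa_i$ are principal curvatures of the graph $\Gamma$ rather than eigenvalues of $D^2u$, so the operator depends nonlinearly on both $Du$ and $D^2u$ through the induced metric $g_{ij}=\delta_{ij}+u_iu_j$. I would follow the dual strategy of \cite{HO} and \cite{CHB} and pass to the Legendre transform $U(y)=x\cdot y-u(x)$, $y=Du(x)$, under which $D^2U=(D^2u)^{-1}$, the roles of $\Omega$ and $\tilde\Omega$ are interchanged, and the oblique condition $h_t(Du)=0$ becomes the purely geometric constraint that $U$ be defined on $\tilde\Omega_t$. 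In these dual coordinates, $F(\kappa)=c$ rewrites as a concave fully nonlinear equation for $U$ with a comparably well-behaved oblique boundary condition, and the boundary double-normal second-derivative estimate is obtained via barriers built from the defining functions of $\Omega$ and $\tilde\Omega_t$ together with a maximum-principle argument for the linearized operator; the tangential and mixed tangential-normal components are controlled by differentiating the boundary condition, as is standard for oblique problems.

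Openness follows from the implicit function theorem applied in $C^{2,\alpha}$: the linearization of $\sum\arctan\kappa_i$ at a uniformly convex solution is uniformly elliptic, the linearized boundary condition is oblique, and the Fredholm indeterminacy (constants in $u$) is exactly compensated by the freedom in $c$. Closedness is precisely the content of the a priori estimates established above. Finally, uniqueness of the constant $c$ and uniqueness of $u$ up to an additive constant are obtained by the standard comparison argument: writing the difference $w=u^{(1)}-u^{(2)}$ of two solutions, the equation becomes a linear elliptic equation for $w$ with an oblique boundary condition satisfied on $\partial\Omega$, so the maximum principle forces $w$ to be constant and the two constants $c$ to coincide.
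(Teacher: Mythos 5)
Your overall framework — continuity method, oblique reformulation $h(Du)=0$, Legendre duality to control $D^2u$ from both sides, Evans--Krylov plus Schauder for regularity, openness via the implicit function theorem, closedness from the a priori estimates, and uniqueness by comparison — matches the paper's strategy. But there is a genuine gap in the choice of continuity path, and it is not cosmetic.

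You propose to fix the operator and deform the \emph{target domain} from a ball $\tilde\Omega_0$ to $\tilde\Omega$, citing as the base case that a ball ``has an explicit radial solution available.'' This fails: $\Omega$ is an arbitrary uniformly convex domain, so the problem $\sum_i\arctan\kappa_i=c$, $Du(\Omega)=B$, has no radial solution unless $\Omega$ is itself a ball, and even then the solution solves a nontrivial ODE rather than being explicit. Without a genuinely solvable starting point, your set $T$ is not shown to be nonempty and the continuity method never starts. You could try deforming $\Omega$ to a ball simultaneously, but then you must prove solvability for the ball-to-ball problem, and you must control all constants (obliqueness, convexity moduli, barrier constants) uniformly as both domains move; neither is addressed.

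The paper avoids this by deforming the \emph{operator} rather than the domains: it sets $G^t = tG + (1-t)\sum_i\arctan\lambda_i(D^2u)$ with $\Omega,\tilde\Omega$ fixed. At $t=0$ this is the Hessian special Lagrangian equation, whose second boundary value problem was solved by Brendle and Warren, giving a rigorous nonempty base case with no explicit computation needed. The paper then checks (using $u_{kl}=v\,b_{ki}a_{ij}b_{jl}$) that the interpolated operator $G^t$ inherits the ellipticity and concavity structure from $G$ uniformly in $t$, so the estimates of Sections~3--4 apply along the whole path. This is the idea you are missing: to pin down a usable endpoint for the continuity method, interpolate in the space of operators toward the known Brendle--Warren result rather than in the space of domains toward a purportedly explicit solution.
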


To obtain the existence result  we use the method of continuity by carrying out a-priori estimate on the solutions to (\ref{e1.3})-(\ref{e1.4}).

The rest of this article is organized as follows. The next section is to present the structure conditions for the operator $F$ with the related operator $G$ and the fundamental formulas  for the principal curvatures of the graph $\Gamma$ in $\mathbb{R}^{n}$. By the preliminary knowledge we implement the strictly oblique estimate in section 3 and then in section 4 we obtain the $C^2$ estimate by considering the operator on manifold as same as J. Urbas'work \cite{J}.  Finally we give the proof of the main theorem by the continuity method in section 5.
\section{Preliminary}

We begin by stating some facts about the special Lagrangian curvature operator
$$F(\kappa_{1},\cdots, \kappa_{n}):=\sum_{i=1}^n\arctan\kappa_{i},$$
and $$\Gamma^+_n:=\left\{(\kappa_1,\cdots,\kappa_n)\in \mathbb{R}^n:\kappa_i>0,\ i=1,\cdots,n\right\}.$$
These properties are trivial and can be found in \cite{HO}.

\begin{lemma}\label{l2.1}
Based on the above definition, the following properties are established.\\
(i) $F$ is  a smooth symmetric function defined on $\overline {{\Gamma}^+_n}$ and satisfying
$$-\infty<F(0,\cdots,0)<F(+\infty,\cdots,+\infty)<+\infty,$$
$$\frac{\partial F}{\partial \kappa_i}>0,\ \ 1\leq i\leq n\ \  \text{on}\ \  \Gamma^+_n,$$
and
$$\left(\frac{\partial^2 F}{\partial \kappa_i\partial \kappa_j}\right)\leq 0\ \  \text{on}\ \  \overline {{\Gamma}^+_n}.$$
(ii) For any $(\mu_1,\cdots,\mu_n)\in {\Gamma}^+_n$, denote
$$\kappa_i=\frac{1}{\mu_i},\ \ 1\leq i\leq n,$$
and
$$\tilde F (\mu_1,\cdots,\mu_n):=-F(\kappa_1,\cdots,\kappa_n).$$
Then
$$\left(\frac{\partial^2 \tilde F}{\partial \mu_i\partial \mu_j}\right)\leq 0\ \  \text{on}\ \  \overline {{\Gamma}^+_n}.$$
(iii) For any $s_1>0$, $s_2>0$, define
$$\Gamma^{+}_{]s_1,s_2[}=\{(\kappa_{1},\cdots, \kappa_{n})\in {\Gamma}^+_n:0\leq\min_{1\leq i\leq n}\kappa_i\leq s_1,\ \max_{1\leq i\leq n}\kappa_i\geq s_2\}.$$
Then there exist positive constants $\Lambda_1$ and $\Lambda_2$, depending  on $s_1$ and $s_2$, such that for any $(\kappa_{1},\cdots, \kappa_{n})\in \Gamma^{+}_{]s_1,s_2[}$,
 $$\Lambda_1\leq\sum^{n}_{i=1}\frac{\partial F}{\partial \kappa_{i}}\leq \Lambda_2,$$
and
$$\Lambda_1\leq\sum^{n}_{i=1}\frac{\partial F}{\partial \kappa_{i}}\kappa^{2}_{i}\leq \Lambda_2.$$
\end{lemma}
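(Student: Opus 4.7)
The plan is to verify each of the three claims by direct single-variable computation, since $F(\kappa_1,\dots,\kappa_n)=\sum_i \arctan\kappa_i$ is separable, so its Hessian is diagonal and every quantity reduces to an exercise about $\arctan$.

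For part (i), smoothness and symmetry are immediate from the formula. For the boundedness, $\arctan$ maps $[0,\infty]$ into $[0,\pi/2]$, so $F(0,\dots,0)=0$ and $F(+\infty,\dots,+\infty)=n\pi/2$, both finite. The monotonicity follows from
$$\frac{\partial F}{\partial \kappa_i}=\frac{1}{1+\kappa_i^2}>0.$$
For concavity I would observe that $\partial^2 F/\partial\kappa_i\partial\kappa_j=0$ when $i\neq j$ and $\partial^2 F/\partial\kappa_i^2=-2\kappa_i/(1+\kappa_i^2)^2\leq 0$ on $\overline{\Gamma_n^+}$, so the Hessian is diagonal with nonpositive entries and hence negative semi-definite.

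For part (ii), the key is the identity $\arctan(1/\mu)+\arctan\mu=\pi/2$ valid for $\mu>0$. Under $\kappa_i=1/\mu_i$ this yields
$$\tilde F(\mu_1,\dots,\mu_n)=-\sum_{i=1}^n\arctan\frac{1}{\mu_i}=\sum_{i=1}^n\arctan\mu_i-\frac{n\pi}{2},$$
so $\tilde F$ differs from $F$ only by a constant when viewed as a function of $\mu$. Its Hessian at $\mu$ equals the Hessian of $F$ evaluated at $\mu$, which is negative semi-definite by (i).

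For part (iii), the two sums are
$$\sum_{i=1}^n\frac{\partial F}{\partial\kappa_i}=\sum_{i=1}^n\frac{1}{1+\kappa_i^2},\qquad \sum_{i=1}^n\frac{\partial F}{\partial\kappa_i}\kappa_i^2=\sum_{i=1}^n\frac{\kappa_i^2}{1+\kappa_i^2}.$$
Since each summand lies in $[0,1]$, both sums are at most $n$, giving $\Lambda_2=n$. For the lower bound I would use the defining conditions of $\Gamma^+_{]s_1,s_2[}$: there exists an index $i_0$ with $\kappa_{i_0}\leq s_1$, whence $1/(1+\kappa_{i_0}^2)\geq 1/(1+s_1^2)$, and an index $j_0$ with $\kappa_{j_0}\geq s_2$, whence $\kappa_{j_0}^2/(1+\kappa_{j_0}^2)\geq s_2^2/(1+s_2^2)$. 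Dropping the other (nonnegative) terms in each sum, the choice $\Lambda_1=\min\{1/(1+s_1^2),\,s_2^2/(1+s_2^2)\}$ works. There is no genuine obstacle here; the author already flags the lemma as trivial, and the only point requiring a moment of thought is recognizing the $\arctan(1/\mu)$ identity in (ii).
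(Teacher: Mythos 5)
Your proof is correct. The paper itself gives no argument for this lemma, remarking only that ``these properties are trivial and can be found in \cite{HO},'' so your self-contained verification goes beyond what the text supplies; the computations (diagonal Hessian with entries $-2\kappa_i/(1+\kappa_i^2)^2\le 0$; the identity $\arctan(1/\mu)=\pi/2-\arctan\mu$ for $\mu>0$ showing $\tilde F=F-n\pi/2$ as a function of $\mu$; and the bounds $\Lambda_2=n$, $\Lambda_1=\min\{1/(1+s_1^2),\,s_2^2/(1+s_2^2)\}$ obtained by isolating the indices pinned down by the membership conditions of $\Gamma^{+}_{]s_1,s_2[}$) are all accurate and are exactly the intended elementary content.
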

It follows from \cite{GT} and \cite{LLJ}   that  we can state various geometric quantities associated with the graph of  $u\in C^{2}(\Omega)$.
In the coordinate system Latin indices range from 1 to $n$ and indicate quantities in the graph. We use the Einstein summation convention,
if the indices are different from 1 and n. $u_{i}, u_{ij},u_{ijk} \cdots$ denote the  all derivatives of $u$ according to $x_{i},x_{j},x_{k}\cdots$.
 The induced metric on  $$\Gamma\triangleq\{(x,u(x))\mid x\in \Omega\}$$ is given by
\begin{equation*}
g_{ij}=\delta_{ij}+u_{i}u_{j}
\end{equation*}
and its inverse is
\begin{equation*}
g^{ij}=\delta_{ij}-\frac{u_{i}u_{j}}{1+|Du|^{2}}.
\end{equation*}
The second fundamental form is denoted as
\begin{equation*}
h_{ij}=\frac{u_{ij}}{\sqrt{1+|Du|^{2}}}.
\end{equation*}
The principal curvatures of $\Gamma$ are the eigenvalues of the second fundamental form
$h_{ij}$ relative to $g_{ij}$, i.e, the eigenvalues of the mixed tensor $h^{j}_{i}\equiv h_{ik}g^{kj}$.
By \cite{LLJ} we remark that they are the eigenvalues of the symmetric matrix
\begin{equation}\label{e2.7}
a_{ij}=\frac{1}{v}b^{ik}u_{kl}b^{lj},
\end{equation}
where $v=\sqrt{1+|Du|^{2}}$ and $b^{ij}$ is the positive square root of $g^{ij}$ taking the form
\begin{equation*}
b^{ij}=\delta_{ij}-\frac{u_{i}u_{j}}{v(1+v)}.
\end{equation*}
Explicitly we have shown
\begin{equation}\label{e2.8}
a_{ij}=\frac{1}{v}\left\{u_{ij}-\frac{u_{i}u_{l}u_{jl}}{v(1+v)}-\frac{u_{j}u_{l}u_{il}}{v(1+v)}
+\frac{u_{i}u_{j}u_{k}u_{l}u_{kl}}{v^{2}(1+v)^{2}}\right\}.
\end{equation}
Then by (\ref{e2.7}) one can deduce that
\begin{equation}\label{e2.9}
u_{ij}=vb_{ik}a_{kl}b_{lj}
\end{equation}
where $b_{ij}$  is the inverse of $b^{ij}$ expressed as
\begin{equation*}
b_{ij}=\delta_{ij}+\frac{u_{i}u_{j}}{1+v}.
\end{equation*}

Denote $A=[a_{ij}]$ and $F[A]=\sum_{i=1}^n\arctan\kappa_{i}$, where $(\kappa_1,\cdots, \kappa_n)$ are the  eigenvalues of
the symmetric matrix $[a_{ij}]$. Then the properties of the operator $F$ are reflected in Lemma \ref{l2.1}.
It follows from Lemma \ref{l2.1} (i) that we can show that
\begin{equation*}
F_{ij}[A]\xi_{i}\xi_{j}>0 \quad \mathrm{for}\quad \mathrm{all}\quad \xi\in \mathbb{R}^{n}-\{0\}
\end{equation*}
where
\begin{equation*}
F_{ij}[A]=\frac{\partial F[A]}{\partial a_{ij}}.
\end{equation*}
From \cite{J} we see that $[F_{ij}]$ diagonal if A is diagonal, and in this case
\begin{equation*}
[F_{ij}]=\mathrm{diag}\left(\frac{\partial F}{\partial \kappa_{1}},\cdots, \frac{\partial F}{\partial \kappa_{n}}\right).
\end{equation*}
If $u$ is convex, by (\ref{e2.7}) we deduce that the  eigenvalues of the  matrix $[a_{ij}]$ must be in $\overline {{\Gamma}^+_n}$.
Then Lemma \ref{l2.1} (i) implies  that
\begin{equation*}
F_{ij,kl}[A]\eta_{ij}\eta_{kl}\leq 0
\end{equation*}
for any real symmetric matrix $[\eta_{ij}]$, where
\begin{equation*}
F_{ij,kl}[A]=\frac{\partial^{2}F[A]}{\partial a_{ij}\partial a_{kl}}.
\end{equation*}
According to equation (\ref{e1.3}) we consider the fully nonlinear elliptic differential equation of the type
\begin{equation}\label{e2.10}
G(Du,D^{2}u)=F[A]=c.
\end{equation}
As in \cite{J}, differentiating this once we have
\begin{equation}\label{e2.11}
G_{ij}u_{ijk}+G_{i}u_{ik}=0
\end{equation}
where we use the notation
\begin{equation*}
G_{ij}=\frac{\partial G}{\partial r_{ij}},\quad G_{i}=\frac{\partial G}{\partial p_{i}}
\end{equation*}
with $r$ and $p$  representing for the second derivative and gradient variables respectively. So as to prove the strict obliqueness estimate for the problem (\ref{e1.3})-(\ref{e1.4}), we need to recall some expressions  for the derivatives of $G$ from a simple calculation already done in \cite{J}. In fact, there holds
\begin{equation}\label{e2.12}
G_{ij}=F_{kl}\frac{\partial a_{kl}}{\partial r_{ij}}=\frac{1}{v}b^{ik}F_{kl}b^{lj}
\end{equation}
and
\begin{equation*}
G_{i}=F_{kl}\frac{\partial a_{kl}}{\partial p_{i}}=F_{kl}\frac{\partial}{\partial p_{i}}\left(\frac{1}{v}b^{kp}b^{ql}\right)u_{pq}.
\end{equation*}
A simple calculation yields
\begin{equation*}
G_{i}=-\frac{u_{i}}{v^{2}}F_{kl}a_{kl}-\frac{2}{v}F_{kl}a_{lm}b^{ik}u_{m}.
\end{equation*}

 The explicit expression for $\mathcal{T}_{G}=\sum^{n}_{i=1}G_{ii}$ is the trace of a product of three matrices by (\ref{e2.12}), so it is invariant under orthogonal transformations. Hence, to compute $\mathcal{T}_{G}$, we may assume for now that $[a_{ij}]$ is diagonal. With the aid of  (\ref{e1.4}), we obtain that $Du$ is bounded and the eigenvalues of $[b^{ij}]$ are bounded between two controlled positive constants. Since (\ref{e2.12}), it follows that there exist positive constants $\sigma_{1}$,$\sigma_{2}$ depending only on the least upper bound of $|Du|$ in  $\Omega$, such that
\begin{equation}\label{e2.15}
\sigma_{1}\mathcal{T}\leq\mathcal{T}_{G}\leq\sigma_{2}\mathcal{T}
\end{equation}
where $\mathcal{T}=\sum^{n}_{i=1}F_{ii}$.

For the same reason, by making use of Lemma \ref{l2.1} (iii), (\ref{e2.9}) and (\ref{e2.12}),  there exist two positive constants
$\sigma_{3}$,$\sigma_{4}$ depending only on the  upper bound of $|Du|$ in  $\Omega$, such that
\begin{equation}\label{e2.16}
\sigma_{3}\sum^{n}_{i=1}\frac{\partial F}{\partial \kappa_{i}}\kappa^{2}_{i}\leq \sum^{n}_{i=1}\frac{\partial G}{\partial \lambda_{i}}\lambda^{2}_{i}\leq \sigma_{4}\sum^{n}_{i=1}\frac{\partial F}{\partial \kappa_{i}}\kappa^{2}_{i}
\end{equation}
where $(\lambda_1,\cdots, \lambda_n)$ are the eigenvalues of Hessian matrix $D^2 u$ according to $x$.

The concavity of $F$ and the positive definiteness of $[F_{ij}a_{ij}]$ imply that$[F_{ij}a_{ij}]$ is bounded, i.e.
\begin{equation}\label{e2.17}
F_{ij}a_{ij}=\sum^{n}_{i=1}F_{i}\kappa_{i}\leq F(\kappa_{1},\ldots,\kappa_{n})\leq\frac{n\pi}{2}.
\end{equation}
Thus $G_{i}$ is bounded.

In order to get the strict obliqueness estimate, we will use the Legendre transform of $u$ which is the convex function $u^{\ast}$ on $\tilde{\Omega}=Du(\Omega)$ defined by
\begin{equation*}
u^{\ast}(y)=x\cdot Du(x)-u(x)
\end{equation*}
and
\begin{equation*}
y=Du(x).
\end{equation*}
It follows that
\begin{equation*}
\frac{\partial u^{\ast}}{\partial y_{i}}=x_{i},\frac{\partial ^{2}u^{\ast}}{\partial y_{i}\partial y_{j}}=u^{ij}(x)
\end{equation*}
where $[u^{ij}]=[D^{2}u]^{-1}$.\\
 In view of (\ref{e1.4}), (\ref{e2.10}), the function $u^{\ast}$ in $\tilde{\Omega}$ satisfies that
\begin{equation}\label{e2.20}
G^{\ast}(y, D^{2}u^{\ast})\triangleq-G(y,[D^{2}u^{\ast}]^{-1})+\frac{n\pi}{2}=\frac{n\pi}{2}-c
\end{equation}
and
\begin{equation}\label{e2.21}
Du^{\ast}(\tilde{\Omega})=\Omega.
\end{equation}
Hence, we rewrite equation (\ref{e2.20}) as
\begin{equation}\label{e2.22}
F^{\ast}[a^{\ast}_{ij}]\triangleq F(\mu_{1},\ldots,\mu_{n})=-F(\mu_{1}^{-1},\ldots,\mu_{n}^{-1})+\frac{n\pi}{2}=\frac{n\pi}{2}-c
\end{equation}
where $\mu_{1},\ldots,\mu_{n}$ are the eigenvalues of the matrix $[a^{\ast}_{ij}]$ given by
\begin{equation*}
a^{\ast}_{ij}=\sqrt{1+|y|^{2}}b^{\ast}_{ik}u_{kl}^{\ast}b^{\ast}_{lj}
\end{equation*}
where
\begin{equation*}
b^{\ast}_{ij}=\delta_{ij}+\frac{y_{i}y_{j}}{1+\sqrt{1+|y|^{2}}}.
\end{equation*}
The inverse matrix $[b^{\ast ij}]$ of $[b^{\ast}_{ij}]$ is given by
\begin{equation*}
b^{\ast ij}=\delta_{ij}-\frac{y_{i}y_{j}}{\sqrt{1+|y|^{2}}(1+\sqrt{1+|y|^{2}})}.
\end{equation*}
and
\begin{equation}\label{e2.21aa}
G^{\ast}(y, D^{2}u^{\ast})=F^{\ast}[a^{\ast}_{ij}]=\frac{n\pi}{2}-c.
\end{equation}
In the following we need to examine the bound properties of the coefficients for the linearized operator
according to (\ref{e2.20}). On account of $y\in\tilde{\Omega}$, an argument similar to that used above show that there exist two controlled positive constants $\sigma_{5}, \sigma_{6}$, such that
\begin{equation*}
\sigma_{5}\mathcal{T}^{\ast}\leq\mathcal{T}^{\ast}_{G}\leq\sigma_{6}\mathcal{T}^{\ast}
\end{equation*}
where $\mathcal{T}^{\ast}=\sum^{n}_{i=1}F^{\ast}_{ii}$ and $\mathcal{T}^{\ast}_{G}=\sum^{n}_{i=1}G^{\ast}_{ii}$. Taking derivatives to (\ref{e2.20}), we conclude that
\begin{equation}\label{e2.27}
G^{\ast}_{ij}u_{ijk}^{\ast}+G^{\ast}_{y_{k}}=0
\end{equation}
where
\begin{equation*}
G^{\ast}_{ij}=\sqrt{1+|y|^{2}}b^{\ast}_{ik}F^{*}_{kl}b^{*}_{lj}
\end{equation*}
and
\begin{equation*}
\begin{aligned}
G^{*}_{y_{k}}=\frac{\partial G^{*}}{\partial y_{k}}&=F^{*}_{il}\frac{\partial}{\partial y_{k}}\left(\sqrt{1+|y|^{2}}b^{*}_{ip}b^{*}_{ql}\right)u_{pq}^{*}\\
&=\frac{y_{k}}{1+|y|^{2}}F^{*}_{il}a^{*}_{il}+2F^{*}_{il}a^{*}_{lm}(D_{k}b^{*}_{ip})b^{*pm}.\\
\end{aligned}
\end{equation*}
By making use of $F^{*}=F$, as same as (\ref{e2.17}), we can show that $G^{*}_{y_{i}}$ is also bounded.
\vspace{5mm}
\section{The strict obliqueness estimate}

To establish the strict obliqueness estimate, firstly we require the following result.
\begin{lemma}\label{l3.1}
Let $\kappa_{1}(x)$, $\cdots$, $\kappa_{n}(x)$ be the principal curvatures of $\Gamma=\{(x,u(x))\mid x\in \Omega\}$ at $x$. Suppose that  $u\in  C^{\infty}(\bar{\Omega})$ is a uniformly convex solution of (\ref{e1.3})-(\ref{e1.4}), then there exists $M_{1}>0$ and $M_{2}>0$ depending only on
$\Omega$ and $\tilde{\Omega}$ such that
\begin{equation}\label{e3.1}
\min_{1\leq i\leq n}\kappa_i(x)\leq M_{1},\ \  \max_{1\leq i\leq n}\kappa_i(x)\geq M_{2}.
\end{equation}
\end{lemma}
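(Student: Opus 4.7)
The plan is to translate both inequalities in (\ref{e3.1}) into uniform two-sided bounds on the Lagrangian angle $c$. Since $c = \sum_i \arctan\kappa_i(x)$ is constant on $\Omega$, the pointwise estimates $n\arctan(\min_i\kappa_i(x)) \leq c \leq n\arctan(\max_i\kappa_i(x))$ hold at every $x \in \Omega$, so bounds of the form $0 < \delta_1 \leq c \leq n\pi/2 - \delta_2 < n\pi/2$ depending only on $(\Omega,\tilde{\Omega})$ immediately yield $M_1 = \cot(\delta_2/n)$ and $M_2 = \tan(\delta_1/n)$. Two elementary tools drive the proof of these bounds: from $a_{ij} = v^{-1}b^{ik}u_{kl}b^{lj}$ in (\ref{e2.7}) together with $\det[b^{ij}] = 1/v$, where $v = \sqrt{1+|Du|^2}$, one has the pointwise identity $\det D^2 u = v^{n+2}\prod_{i=1}^n\kappa_i$; and since $u$ is uniformly convex with $Du:\Omega \to \tilde{\Omega}$ a diffeomorphism, the change-of-variables formula gives $\int_\Omega \det D^2 u \, dx = |\tilde{\Omega}|$. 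Throughout the argument, $|Du| \leq \mathrm{diam}\,\tilde{\Omega} =: R$ ensures $1 \leq v \leq \sqrt{1+R^2}$.

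For the upper bound on $c$, set $\delta := n\pi/2 - c > 0$ and rewrite the equation as $\sum_i \arctan(1/\kappa_i) = \delta$. Only small $\delta$ is at issue, so assume $\delta < \pi/2$. Positivity of every summand forces $\arctan(1/\kappa_i) < \delta$ and hence $\kappa_i > \cot \delta$ at every $x \in \Omega$, which yields $\prod_i \kappa_i > \cot^n \delta$ pointwise. The determinant identity then gives $\det D^2 u \geq \cot^n \delta$ on $\Omega$, and integrating produces $|\tilde{\Omega}| \geq |\Omega|\cot^n \delta$, so $\delta \geq \arctan((|\Omega|/|\tilde{\Omega}|)^{1/n})$, an explicit positive constant depending only on $(\Omega,\tilde\Omega)$. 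This bounds $c$ away from $n\pi/2$ and delivers $M_1$.

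Symmetrically, for the lower bound on $c$, only small $c$ is at issue. Positivity of every summand in $\sum_i\arctan\kappa_i = c$ forces $\arctan\kappa_i < c$ and hence $\kappa_i < \tan c$ at every $x$, so $\prod_i\kappa_i < \tan^n c$ pointwise. Combined with $v \leq \sqrt{1+R^2}$ this gives $\det D^2 u \leq (1+R^2)^{(n+2)/2}\tan^n c$, and integrating yields $|\tilde{\Omega}| \leq (1+R^2)^{(n+2)/2}\tan^n c \cdot |\Omega|$, which forces $c \geq \delta_1 > 0$ for an explicit $\delta_1 = \delta_1(\Omega,\tilde{\Omega})$. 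This yields $M_2$ and completes the argument. Alternatively, one can obtain this lower bound by dualizing: by (\ref{e2.22}) the Legendre transform $u^{\ast}$ on $\tilde{\Omega}$ satisfies the analogous equation with constant $n\pi/2 - c$, so the previous step applied to $u^{\ast}$ gives $c \geq \delta_1 > 0$ by the symmetry $(\Omega,\tilde\Omega)\leftrightarrow(\tilde\Omega,\Omega)$.

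No deep analytic difficulty is expected here: the only subtle point is that the constancy of $c$ on $\Omega$ is what converts a hypothetical pointwise failure of the curvature bound at a single point into a uniform pointwise bound on every $\kappa_i(x)$ across $\Omega$, which can then be combined with the global volume identity $\int_\Omega \det D^2 u \, dx = |\tilde{\Omega}|$ to produce a quantitative contradiction. The verification of $\det D^2 u = v^{n+2}\prod_i \kappa_i$ from the formulas in Section~2 is routine, and no maximum-principle or barrier construction is required for this lemma.
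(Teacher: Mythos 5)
Your proof is correct, and it reaches the conclusion by a genuinely different route from the paper, even though both routes rest on the same two ingredients: the volume identity $\int_\Omega \det D^2u\,dx = |\tilde\Omega|$ and the constancy of the Lagrangian angle $c$. The paper invokes the mean value theorem to produce a single point $\bar x$ with $\det D^2u(\bar x) = |\tilde\Omega|/|\Omega|$, then controls $\prod\kappa_i(\bar x)$ and transfers the resulting bound on $F(\kappa(\bar x))$ to all of $\Omega$ by monotonicity of $F$. You reverse this: constancy of $c$ converts a pointwise curvature constraint into a \emph{uniform} pointwise bound on $\prod\kappa_i(x)$, which via the identity $\det D^2u = v^{n+2}\prod_i\kappa_i$ (this identity is correct: from (\ref{e2.9}) one has $\det D^2u = v^n(\det b_{ij})^2\det A$ with $\det b_{ij}=v$ since $\det b^{ij}=\sqrt{\det g^{ij}}=1/v$) becomes a pointwise bound on $\det D^2u$; integrating and comparing with $|\tilde\Omega|$ then bounds $c$ away from $0$ and $n\pi/2$. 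Unwinding the algebra, with $\sigma_2=1$ and $\sigma_1=(1+R^2)^{-(n+2)/2}$ your constants $M_1=\cot(\delta_2/n)$ and $M_2=\tan(\delta_1/n)$ agree with the paper's $M_1=\tan\bigl((\arctan(\sigma_2\theta_0)^{1/n}+(n-1)\pi/2)/n\bigr)$ and $M_2=\tan\bigl(\arctan(\sigma_1\theta_0)^{1/n}/n\bigr)$, so the two arguments give the same quantitative information. Two minor presentation points: the inequalities $\arctan(1/\kappa_i)<\delta$ and $\arctan\kappa_i<c$ should be $\leq$ in general (strictness only holds because smoothness rules out $\kappa_j\in\{0,\infty\}$), and your "assume $\delta<\pi/2$" and "assume $c<\pi/2$" reductions deserve a one-line remark that otherwise the desired bound is trivial; neither affects correctness. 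Your dual alternative for the lower bound---applying the upper-bound argument to $u^*$ under the symmetry $(\Omega,\tilde\Omega)\leftrightarrow(\tilde\Omega,\Omega)$ and $c\leftrightarrow n\pi/2-c$ via (\ref{e2.22})---is also valid and is a nice observation that the paper does not use at this stage.
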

\begin{proof}
From $Du(\Omega)=\tilde{\Omega}$, we obtain
$$\int_{\Omega}\det D^{2}u(x)dx=|\tilde{\Omega}|.$$
Denote $\lambda_{1}(x)$, $\cdots$, $\lambda_{n}(x)$ be the eigenvalues of $D^{2}u$ at $x$, then there exists $\bar x\in \bar{\Omega}$ such that
$$ \prod_{i=1}^n\lambda_i(\bar x)=\det D^{2}u(\bar x)=\frac{|\tilde{\Omega}|}{|\Omega|}=\theta_{0}.$$
By (\ref{e2.7}), we obtain
$$ \prod_{l=1}^n\kappa_l(\bar x)=\det\left(\frac{1}{v}b^{ik}b^{kj}\right)\det D^{2}u(\bar x)=\det\left(\frac{1}{v}b^{ik}b^{kj}\right)\prod_{l=1}^n\lambda_l(\bar x).$$
With the same method as the discussion (\ref{e2.15}), we have
$$\sigma_{1}\theta_{0}\leq \prod_{l=1}^n\kappa_l(\bar x)\leq\sigma_{2}\theta_{0}.$$
Using (\ref{e1.3}) and the monotonicity of $F$, we get
\begin{equation*}
\begin{aligned}
F\left(\min_{1\leq i\leq n}\kappa_i(x),\cdots, \min_{1\leq i\leq n}\kappa_i(x)\right)&\leq F\left(\kappa_1(x),\cdots,\kappa_n(x)\right)\\
&=F\left(\kappa_1(\bar x),\cdots,\kappa_n(\bar x)\right)\\
&\leq \arctan\left(\min_{1\leq i\leq n}\kappa_i(\bar x)\right)+\frac{(n-1)\pi}{2},\\
\end{aligned}
\end{equation*}
Therefore,
$$\arctan\left(\min_{1\leq i\leq n}\kappa_i( x)\right)\leq\frac{\arctan(\sigma_{2}\theta_{0})^{\frac{1}{n}}+\frac{(n-1)\pi}{2}}{n}<\frac{\pi}{2}$$
and
\begin{equation*}
\begin{aligned}
F\left(\max_{1\leq i\leq n}\kappa_i(x),\cdots, \max_{1\leq i\leq n}\kappa_i(x)\right)&\geq F\left(\kappa_1(x),\cdots,\kappa_n(x)\right)\\
&=F\left(\kappa_1(\bar x),\cdots,\kappa_n(\bar x)\right)\\
&\geq \arctan\left(\max_{1\leq i\leq n}\kappa_i(\bar x)\right)\\
&\geq \arctan(\sigma_{1}\theta_{0})^{\frac{1}{n}}.\\
\end{aligned}
\end{equation*}
Hence
$$n\arctan\left(\max_{1\leq i\leq n}\kappa_i(x)\right)\geq\arctan(\sigma_{1}\theta_{0})^{\frac{1}{n}}>0.$$
In conclusion,
\begin{equation*}
\min_{1\leq i\leq n}\kappa_i(x)\leq M_{1},\ \  \max_{1\leq i\leq n}\kappa_i(x)\geq M_{2},
\end{equation*}
where
\begin{equation*}
M_{1}=\tan\left(\frac{\arctan(\sigma_{2}\theta_{0})^{\frac{1}{n}}+\frac{(n-1)\pi}{2}}{n}\right),\qquad M_{2}=\tan\left(\frac{\arctan(\sigma_{1}\theta_{0})^{\frac{1}{n}}}{n}\right).
\end{equation*}
\end{proof}
By Lemma \ref{l3.1}, the points $(\kappa_{1},\cdots, \kappa_{n})$ are always in $ \Gamma^{+}_{]M_{1},M_{2}[}$ under the problem (\ref{e1.3})-(\ref{e1.4}). Then there exist $\Lambda_1>0$ and $\Lambda_2>0$ depending only on $\Omega$ and $\tilde{\Omega}$, such that $F$ satisfies the structure conditions Lemma \ref{l2.1} (iii). The Legendre transform of $u$ which is the convex function $u^{\ast}$ on $\tilde{\Omega}=Du(\Omega)$
satisfies the equation (\ref{e2.22}). The points $(\mu_{1},\cdots, \mu_{n})$  are also always in $ \Gamma^{+}_{]M_{1},M_{2}[}$
by $[a^{\ast}_{ij}]=[a_{ij}]^{-1}$. Due to the translational invariance of equation (\ref{e2.22}), it can be shown easily that $F^{\ast}=F$.
Then $F^{\ast}$ also satisfies the structure conditions Lemma \ref{l2.1} (iii).
In the following, we always assume that $\Lambda_1$ and $\Lambda_2$ are universal constants depending only on the known data.

Next, we will carry out the strictly oblique estimate. Let $\mathscr{P}_n$ be the set of positive definite symmetric $n\times n$ matrices, and $\kappa_{1}(A)$, $\cdots$, $\kappa_{n}(A)$ be the eigenvalues of $A$.  For $A=(a_{ij})\in \mathscr{P}_n$, recall
$$F[A]:=F\left(\kappa_{1}(A),\cdots, \kappa_{n}(A)\right)=\sum^{n}_{i=1}\arctan\kappa_{i}.$$
and
$$\left(a^{ij}\right)=(a_{ij})^{-1}.$$
Given the bound strictly convex  domain $\tilde{\Omega}$ with smooth boundary in $\mathbb{R}^{n}$,
there exists so-call boundary defining function as follows and the construction process can be seen in \cite{SM}.
\begin{deff}\label{d1}
A smooth function $h:\mathbb{R}^n\rightarrow\mathbb{R}$ is called the defining function of $\tilde{\Omega}$ if
$$\tilde{\Omega}=\{p\in\mathbb{R}^{n} : h(p)>0\},\quad |Dh|_{{\partial\tilde{\Omega}}}=1,$$
and there exists $\theta>0$ such that for any $p=(p_{1},\cdots, p_{n})\in \tilde{\Omega}$ and $\xi=(\xi_{1}, \cdots, \xi_{n})\in \mathbb{R}^{n}$,
$$\frac{\partial^{2}h}{\partial p_{i}\partial p_{j}}\xi_{i}\xi_{j}\leq -\theta|\xi|^{2}.$$
\end{deff}
Therefore, the diffeomorphism condition $Du(\Omega)=\tilde{\Omega}$ in (\ref{e1.4}) is equivalent to
\begin{equation}\label{e3.2}
 h(Du)=0,\ \ x\in \partial\Omega.
\end{equation}
Then (\ref{e1.3})-(\ref{e1.4}) can be rewritten as
\begin{equation}\label{e3.3}
\left\{ \begin{aligned}
F\left[A\right]&=c,\ \ &&x\in \Omega, \\
h(Du)&=0,             &&x\in\partial \Omega,
\end{aligned} \right.
\end{equation}
where $A$ is denoted by (\ref{e2.8}).

This is an oblique boundary value problem of second order fully nonlinear elliptic equation. We also denote $\beta=(\beta^{1}, \cdots, \beta^{n})$ with $\beta^{i}:=h_{p_{i}}(Du)$, and $\nu=(\nu_{1},\cdots,\nu_{n})$ as the unit inward normal vector at $x\in\partial\Omega$.
The expression of the inner product  is
\begin{equation*}
\langle\beta, \nu\rangle=\beta^{i}\nu_{i}.
\end{equation*}

\begin{lemma}\label{l3.3}(See J. Urbas \cite{JU}.) Let $\nu=(\nu_{1},\nu_{2}, \cdots,\nu_{n})$ be the unit inward normal vector of $\partial\Omega$. If $u\in C^{2}(\bar{\Omega})$  with $D^{2}u\geq0$,
then there holds $h_{p_{k}}(Du)\nu_{k}\geq0$.
\end{lemma}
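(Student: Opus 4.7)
The approach is to extract the obliqueness by using the boundary identity $h(Du)=0$ on $\partial\Omega$ in two complementary ways — tangential differentiation to recover the structure of $(D^2u)\beta$, and an interior sign comparison to pin down the resulting scalar — and then invert $D^2u$. Write $\beta^k=h_{p_k}(Du)$ throughout the argument.

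Differentiating $h(Du)=0$ along any tangent direction $\tau$ to $\partial\Omega$ gives $\beta^k u_{kj}\tau_j=0$, so the vector $(\beta^k u_{kj})_j$ is parallel to $\nu$: there is a scalar $\sigma(x)$ on $\partial\Omega$ with $(D^2u)\beta=\sigma\nu$, i.e.\ $\beta^k u_{kj}=\sigma\nu_j$. To sign $\sigma$, I would consider the auxiliary function $\phi(x):=h(Du(x))$ on $\bar{\Omega}$. From $\tilde{\Omega}=\{h>0\}$ and the fact that $Du(\bar{\Omega})\subset\overline{\tilde{\Omega}}$ (which follows from the boundary condition together with the continuity of $Du$ on $\bar{\Omega}$), $\phi$ is nonnegative on $\bar{\Omega}$ and vanishes identically on $\partial\Omega$. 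Hence its inward normal derivative is nonnegative at each boundary point, and the identity $\partial_\nu\phi=\beta^k u_{ki}\nu_i$ combined with the previous line gives $\partial_\nu\phi=\sigma$; thus $\sigma\geq 0$.

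Since $D^2u$ is strictly positive definite in the setting where this lemma is applied, the relation $(D^2u)\beta=\sigma\nu$ inverts to $\beta=\sigma (D^2u)^{-1}\nu$, and pairing with $\nu$ yields
\[
h_{p_k}(Du)\nu_k=\beta\cdot\nu=\sigma\,\langle (D^2u)^{-1}\nu,\nu\rangle\geq 0,
\]
since both $\sigma\geq 0$ and $(D^2u)^{-1}\geq 0$. The only mildly delicate point is that the stated hypothesis is merely $D^2u\geq 0$, whereas the inversion in the last step needs strict positivity; this is harmless for the intended application to uniformly convex solutions of (\ref{e3.3}), and if necessary one may approximate by strictly convex perturbations and pass to the limit. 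No deeper obstacle appears, as the argument is essentially a one-line computation once the tangential structure and the interior sign of $\phi$ have been observed.
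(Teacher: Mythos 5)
The paper only cites Urbas \cite{JU} for this lemma and gives no proof of its own, so there is no in-text argument to compare against; your reconstruction is the standard one and is correct in the strictly convex case, which is the only case the paper ever uses. The three ingredients you isolate are exactly the expected ones: tangential differentiation of $h(Du)=0$ to see that $(D^2u)\beta$ is parallel to $\nu$, nonnegativity of the inward normal derivative of $\phi=h(Du)$ (which vanishes on $\partial\Omega$ and is nonnegative inside, because $Du(\bar\Omega)\subset\overline{\tilde\Omega}$) to sign the proportionality scalar $\sigma$, and inversion of $D^2u$ to conclude $\beta\cdot\nu=\sigma\,\nu^T(D^2u)^{-1}\nu\geq 0$.

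The one point I would push back on is the suggested patch for the semidefinite case. Approximating by strictly convex perturbations is not a valid repair here: replacing $u$ by $u+\tfrac{\epsilon}{2}|x|^2$ destroys the boundary relation $h(Du)=0$ and changes the image $Du(\Omega)$, so the perturbed functions no longer satisfy the hypotheses whose conclusion you want to pass to the limit. If one genuinely wants to cover $D^2u\geq 0$, the cleaner observation is that when $\sigma>0$ the identity $(D^2u)\beta=\sigma\nu$ forces $\nu$ into the range of $D^2u$; writing $\beta=\beta_0+\beta_1$ with $\beta_0\in\ker D^2u$ (orthogonal to the range, hence to $\nu$) and $\beta_1$ in the range gives $\beta\cdot\nu=\beta_1\cdot\nu=\sigma\,\nu^T\bigl(D^2u|_{\mathrm{range}}\bigr)^{-1}\nu\geq 0$ without inverting the full Hessian. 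When $\sigma=0$ this local computation is genuinely inconclusive, but that degenerate possibility never occurs for the uniformly convex solutions to which the paper applies the lemma, so the mild overstatement in the hypothesis is harmless.
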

\begin{lemma}\label{l3.4} Assume that  $[A_{ij}]$ is semi-positive real symmetric matrix and $[B_{ij}]$, $[C_{ij}]$ are two real symmetric matrixes. Then
$$2A_{ij}B_{jk}C_{ki}\leq A_{ij}B_{ik}B_{jk}+A_{ij}C_{ik}C_{jk}.$$
\end{lemma}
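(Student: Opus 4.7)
The plan is to exploit positive semi-definiteness of $[A_{ij}]$ applied to the difference $[B_{ij}]-[C_{ij}]$, one index at a time. For each fixed index $k$, define the real vector $w^{(k)}$ by $w^{(k)}_i := B_{ik}-C_{ik}$. Since $[A_{ij}]$ is a real symmetric positive semi-definite matrix, the quadratic form satisfies
\begin{equation*}
A_{ij}\, w^{(k)}_i w^{(k)}_j \geq 0
\qquad\text{for each }k.
\end{equation*}
Summing this inequality over $k$ (which under the Einstein convention amounts to leaving the repeated index $k$ free to sum) yields
\begin{equation*}
A_{ij}(B_{ik}-C_{ik})(B_{jk}-C_{jk})\geq 0.
\end{equation*}

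Next I would expand the product. This produces four terms: $A_{ij}B_{ik}B_{jk}$, $A_{ij}C_{ik}C_{jk}$, and two mixed terms $-A_{ij}B_{ik}C_{jk}$ and $-A_{ij}C_{ik}B_{jk}$. Using the symmetry $A_{ij}=A_{ji}$ together with relabeling of the dummy indices $i\leftrightarrow j$, one shows
\begin{equation*}
A_{ij}B_{ik}C_{jk}=A_{ij}B_{jk}C_{ik},
\end{equation*}
so the two mixed contributions combine into $-2A_{ij}B_{jk}C_{ik}$. Invoking the symmetry of $[C_{ij}]$, i.e.\ $C_{ik}=C_{ki}$, this is exactly $-2A_{ij}B_{jk}C_{ki}$. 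Rearranging the inequality delivers the claim
\begin{equation*}
2A_{ij}B_{jk}C_{ki}\leq A_{ij}B_{ik}B_{jk}+A_{ij}C_{ik}C_{jk}.
\end{equation*}

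I do not anticipate any substantial obstacle: the statement is an elementary algebraic Cauchy--Schwarz-type identity for trilinear contractions. The only point that requires some care is the bookkeeping of indices, in particular verifying that the two cross terms really coincide after using the symmetry of $A$ (and that the final rewriting uses the symmetry of $C$ to pass from $C_{ik}$ to $C_{ki}$). Apart from these manipulations, the argument is immediate.
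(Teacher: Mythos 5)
Your proof is correct and follows essentially the same route as the paper's: both arguments rest on the nonnegativity of $\mathrm{Tr}\bigl(A(B-C)(B-C)\bigr)$, then expand and use the symmetries of $A$, $B$, $C$ to identify the cross terms. Your column-vector formulation, summing $A_{ij}\,w^{(k)}_i w^{(k)}_j\geq 0$ over $k$, is in fact a slightly cleaner justification of that trace inequality than the paper's (loosely worded) claim that $A(B-C)(B-C)$ is itself positive semi-definite symmetric, but the substance is identical.
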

\begin{proof}
We see that $A(B-C)(B-C)$ is a semi-positive real symmetric matrix. Then
$$\mathrm{Tr}(A(B-C)(B-C))\geq 0.$$
Then we obtain
$$\mathrm{Tr}(ABC+ACB)\leq \mathrm{Tr}(ABB+ACC).$$
By direct computations, one can deduce that
$$A_{ij}B_{jk}C_{ki}+A_{ij}C_{jk}B_{ki}\leq A_{ij}B_{ik}B_{jk}+A_{ij}C_{ik}C_{jk},$$
by making use of $B=B^{T}, C=C^{T}$. It's easy to see that $A_{ij}B_{jk}C_{ki}=A_{ij}C_{jk}B_{ki}$, then we obtain the desired results.
\end{proof}
Now, we can present
\begin{lemma}\label{l3.5}
 If $u$ is a uniformly smooth convex solution of (\ref{e3.3}), then the strict obliqueness estimate
\begin{equation}\label{e3.4}
\langle\beta, \nu\rangle\geq \frac{1}{C_{1}}>0,
\end{equation}
holds on $\partial \Omega$ for some universal constant $C_{1}$, which depends only on $\Omega$ and $\tilde{\Omega}$.
\end{lemma}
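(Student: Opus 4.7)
The plan is to extend $\langle\beta,\nu\rangle$ from $\partial\Omega$ to $\bar\Omega$ and apply the maximum principle, in the spirit of Urbas's treatment of the Monge--Amp\`ere case (see \cite{JU}). Since $\Omega$ is itself uniformly convex with smooth boundary, the construction behind Definition \ref{d1} yields a concave defining function $\tilde h$ of $\Omega$: $\tilde h>0$ in $\Omega$, $\tilde h=0$ and $|D\tilde h|=1$ on $\partial\Omega$, $D^2\tilde h\leq-\tilde\theta I$, and $\nu=D\tilde h$ on $\partial\Omega$. Setting
\[
w(x):=h_{p_k}(Du(x))\,\tilde h_{x_k}(x),\qquad x\in\bar\Omega,
\]
we have $w|_{\partial\Omega}=\langle\beta,\nu\rangle$, which is already nonnegative by Lemma \ref{l3.3}. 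The problem therefore reduces to proving a uniform positive lower bound for $\min_{\bar\Omega}w$.

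Next I would compute $Lw:=G_{ij}w_{ij}$, using the differentiated equation (\ref{e2.11}) to eliminate the third derivatives $u_{ijk}$. The output splits schematically into four groups: a term quadratic in $D^2u$ of the type $G_{ij}h_{p_kp_l}(Du)u_{ki}u_{lj}\tilde h_{x_m}$, which has a definite sign thanks to the concavity of $h$ from Definition \ref{d1}; cross terms of the shape $G_{ij}h_{p_kp_l}(Du)u_{ki}\tilde h_{x_l x_j}$, to which Lemma \ref{l3.4} is applied so as to absorb them into the first group plus a $G_{ij}\tilde h_{x_lx_j}\tilde h_{x_kx_i}$-type piece controlled by the concavity of $\tilde h$; a pure-barrier block $h_{p_k}(Du)G_{ij}\tilde h_{x_kx_ix_j}$ together with $G_{ij}\tilde h_{x_ix_j}$, both controlled by $\mathcal{T}_G$; and a first-order piece coming from $G_iu_{ik}h_{p_kp_l}\tilde h_{x_l}$, bounded by (\ref{e2.17}) and Lemma \ref{l2.1}(iii). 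The trace estimates (\ref{e2.15})--(\ref{e2.16}) keep every coefficient at a common scale proportional to $\mathcal{T}_G$, so the resulting differential inequality takes the form
\[
Lw \leq C_1(|Dw|+w)\,\mathcal{T}_G + C_2\,\mathcal{T}_G
\]
with universal constants $C_1,C_2$ depending only on $\Omega,\tilde\Omega$ through Lemmas \ref{l2.1} and \ref{l3.1}.

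I would then close the argument with an auxiliary function $\hat w:=\log w + A\,\tilde h(x)$ for a large universal $A$. Concavity of $\tilde h$ combined with (\ref{e2.15}) gives $G_{ij}\tilde h_{x_ix_j}\leq -\tilde\theta\sigma_1\mathcal{T}$, which dominates the error terms from the previous step provided $A$ is chosen large in terms of $\tilde\theta,\sigma_1,\sigma_2,\Lambda_1,\Lambda_2$. At an interior minimum of $\hat w$ one has $Dw/w=-A\,D\tilde h$, so $L\hat w\geq 0$ is incompatible with $w$ being arbitrarily small there; hence the minimum of $\hat w$, and therefore of $\log w$ (since $\tilde h$ vanishes on $\partial\Omega$), is attained on $\partial\Omega$. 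A parallel computation on the dual side via (\ref{e2.21aa})--(\ref{e2.27}), combined with the Legendre reciprocity $\langle\beta,\nu\rangle=\langle\tilde\beta,\tilde\nu\rangle$ at paired boundary points, supplies the matching control on $\tilde\Omega$ that rules out degenerate configurations where the leading quadratic term has ambiguous sign.

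The main obstacle is the algebraic bookkeeping in the middle paragraph: orchestrating the many indefinite cross terms produced by the interaction between derivatives of $h$ and $\tilde h$ so that Lemma \ref{l3.4} genuinely absorbs them into the two negative-definite blocks, all while keeping every coefficient universal. Once the differential inequality $Lw\leq C_1(|Dw|+w)\mathcal{T}_G+C_2\mathcal{T}_G$ is established, the barrier argument is routine; but without the concavities from Definition \ref{d1}, Lemma \ref{l3.4}, and the symmetric control offered by the Legendre transform, one cannot produce a \emph{uniform} positive lower bound $1/C_1$.
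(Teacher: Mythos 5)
Your plan to bound $\min_{\bar\Omega} w$ directly by a maximum principle is conceptually off track, and there are two separate gaps that cannot be fixed by ``algebraic bookkeeping.''

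First, the global extension $w(x)=h_{p_k}(Du)\,\tilde h_{x_k}$ vanishes (and can be negative) in the interior of $\Omega$: since $\tilde h$ attains an interior maximum, $D\tilde h$ has a zero in $\Omega$, and $Dh(Du)$ likewise vanishes at the preimage of the maximum of $h$ in $\tilde\Omega$. So the auxiliary function $\hat w=\log w+A\tilde h$ is not even defined on $\bar\Omega$, and the goal of showing ``$\min_{\bar\Omega}w\geq 1/C$'' is false as stated; only $\min_{\partial\Omega}\langle\beta,\nu\rangle$ is nonnegative (Lemma \ref{l3.3}), not $w$ itself. Second, even if one restricts attention to a tubular neighbourhood of $\partial\Omega$, the differential inequality you derive has the form $Lw\leq C(|Dw|+w)\mathcal T_G+C\mathcal T_G$ with a term \emph{independent of} $w$; at a critical point of $\hat w$ one must dominate $C\mathcal T_G/w$ by $-AG_{ij}\tilde h_{ij}\sim -A\tilde\theta\,\mathcal T$, which fails once $w<C/(A\tilde\theta\sigma_1)$, so the conclusion ``$w$ cannot be arbitrarily small at an interior critical point'' is not forced. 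And if the minimum of $\hat w$ is on $\partial\Omega$ (where $\hat w=\log w$), the argument is simply circular: it concludes nothing beyond what one started with.

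The paper's proof does not try to bound $\omega=\langle\beta,\nu\rangle+\tau h(Du)$ from below globally. Instead it fixes the boundary minimum point $x_0$, uses the barrier $\Phi=\omega-\omega(x_0)+\sigma\ln(1+kh^\ast)+A|x-x_0|^2$ on a small $\Omega_r=\Omega\cap B_r(x_0)$ together with $\mathcal L\omega\leq C\mathcal T_G$ to obtain a \emph{Hopf-type normal-derivative bound} $\omega_n(x_0)>-C$, i.e.\ inequality (\ref{e3.7}). This first-order conclusion at $x_0$ is combined with the tangential first-order conditions (\ref{e3.6}) and the concavity (\ref{e3.5}) to produce the inequality $\tau h_{p_k}h_{p_l}u_{kl}\geq -Ch_{p_n}+\frac1C-\frac1Ch_{p_n}^2$ at $x_0$, which splits into Case (i) (direct bound for $h_{p_n}$) and Case (ii) (lower bound for $h_{p_k}h_{p_l}u_{kl}$). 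The same argument on the Legendre dual $u^\ast$ yields either a direct bound for $h^\ast_{p_n}$ or a lower bound for $h^\ast_{p_k}h^\ast_{p_l}u^\ast_{kl}=\nu_i\nu_ju^{ij}$. The two sides are then welded together by the algebraic identity $\langle\beta,\nu\rangle=\sqrt{h_{p_k}h_{p_l}u_{kl}\cdot u^{ij}\nu_i\nu_j}$, which follows from differentiating $h(Du)=0$ tangentially and observing that $D^2u\,\beta\parallel\nu$ on $\partial\Omega$. Your one-sentence invocation of ``Legendre reciprocity'' gestures at this but does not identify the case split or this product formula, and without them the estimate does not close. In short: the barrier should bound the normal derivative at the boundary minimum, not $w$ itself, and the final step is algebraic rather than a second maximum principle.
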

\begin{rem}
Without loss of generality, in the following we set $C_{1},C_{2}, \cdots,$ to be constants depending only on $\Omega$ and $\tilde{\Omega}$.
\end{rem}
\begin{proof}It is a modification of the proof of Lemma 3.5 in \cite{CHB}.
Define
$$\omega=\langle \beta,\nu\rangle+\tau h(Du),$$
where $\tau$ is a positive constant to be determined. Let $x_0\in \partial \Omega$ such that
$$\langle \beta,\nu\rangle(x_0)=h_{p_k}(Du(x_0))\nu_k(x_0)=\min_{\partial\Omega}\langle \beta,\nu\rangle.$$
By rotation, we may assume that $\nu(x_0)=(0,\cdots,0,1)$. Applying the above assumptions and the boundary condition, we find that
$$\omega(x_0)=\min_{\partial\Omega} \omega=h_{p_{n}}(Du(x_{0})).$$
By the smoothness of $\Omega$ and its convexity, we extend $\nu$ smoothly to a tubular neighborhood of $\partial\Omega$ such that in the matrix sense
\begin{equation}\label{e3.5}
  \left(\nu_{kl}\right):=\left(D_k\nu_l\right)\leq -\frac{1}{C}\operatorname{diag} (\underbrace {1,\cdots, 1}_{n-1},0),
\end{equation}
where $C$ is a positive constant. By Lemma \ref{l3.3}, we see that $h_{p_{n}}(Du(x_{0}))\geq0$.

At the maximum point  $x_0$  one may show that
\begin{equation}\label{e3.6}
 0=\omega_r=h_{p_np_k}u_{kr}+h_{p_k}\nu_{kr}+\tau h_{p_k}u_{kr}, \quad 1\leq r\leq n-1.
\end{equation}
We assume that the following key result
\begin{equation}\label{e3.7}
 \omega_n(x_0)>-C,
\end{equation}
holds which will be proved later, where $C$ is a positive constant depending only on $\Omega$, and $h^{\ast}$. We observe that (\ref{e3.7}) can be rewritten as
\begin{equation}\label{e3.8}
h_{p_np_k}u_{kn}+h_{p_k}\nu_{kn}+ \tau h_{p_k}u_{kn}>-C.
\end{equation}
Multiplying $h_{p_n}$ on both sides of (\ref{e3.8}) and $h_{p_r}$ on both sides of (\ref{e3.6}) respectively, and summing up together, one gets
\begin{equation*}
 \tau h_{p_k}h_{p_l}u_{kl}\geq -Ch_{p_n}- h_{p_k}h_{p_l}\nu_{kl}- h_{p_k}h_{p_np_l}u_{kl}.
\end{equation*}
Combining (\ref{e3.5}) and
$$ 1\leq r\leq n-1,\quad h_{p_k}u_{kr}=\frac{\partial h(Du)}{\partial x_r}=0,\quad h_{p_k}u_{kn}=\frac{\partial h(Du)}{\partial x_n}\geq 0,\quad -h_{p_np_n}\geq 0,$$
we have
$$\tau h_{p_k}h_{p_l}u_{kl}\geq-Ch_{p_n}+\frac{1}{C}|Dh|^2-\frac{1}{C}h^2_{p_n}=-Ch_{p_n}+\frac{1}{C}-\frac{1}{C}h^2_{p_n}.$$
Now, to obtain the estimate $\langle \beta,\nu\rangle$ we divide $-Ch_{p_n}+\frac{1}{C}-\frac{1}{C}h^2_{p_n}$ into two cases at $x_0$.

Case (i).  If
$$-Ch_{p_n}+\frac{1}{C}-\frac{1}{C}h^2_{p_n}\leq \frac{1}{2C},$$
then
$$h_{p_k}(Du)\nu_{k}=h_{p_n }\geq \sqrt{\frac{1}{2}+\frac{C^4}{4}}-\frac{C^2}{2}.$$
It means that there is a uniform non-negative lower bound for $\underset{\partial\Omega}\min \langle \beta,\nu\rangle$.

Case (ii). If
$$-Ch_{p_n}+\frac{1}{C}-\frac{1}{C}h^2_{p_n}> \frac{1}{2C},$$
then we know that there is a non-negative lower bound for $h_{p_k}h_{p_l}u_{kl}$.

Let $u^{\ast}$ be the Legendre transformation of $u$, then by (\ref{e2.21aa}) $u^{\ast}$ satisfies
\begin{equation}\label{e3.10}
\left\{ \begin{aligned}G^{\ast}(y,D^2u^{\ast}) &=\frac{n\pi}{2}-c, \ \ && y\in\tilde{\Omega},\\
h^{\ast}(Du^{\ast})&=0, && y\in\partial\tilde{\Omega},
\end{aligned} \right.
\end{equation}
where $h^{\ast}$ is the defining function of $\Omega$. That is,
$$\Omega=\{\tilde{p}\in\mathbb{R}^{n} : h^{\ast}(\tilde{p})>0\},\ \ \ |Dh^{\ast}|_{\partial\Omega}=1, \ \ \ D^2h^{\ast}\leq -\tilde{\theta}I,$$
where $\tilde{\theta}$ is a positive constant. Here we emphasize that $G^{\ast}=G$ by (\ref{e2.20}) and (\ref{e2.22}).
The unit inward normal vector of $\partial\Omega$ can be represented as $\nu=Dh^{\ast}$. By the same token,
$\tilde{\nu}=Dh$, where $\tilde{\nu}=(\tilde{\nu}_{1}, \tilde{\nu}_{2},\cdots,\tilde{\nu}_{n})$ is the unit inward normal vector of $\partial\tilde{\Omega}$.

Let $\tilde{\beta}=(\tilde{\beta}^{1}, \cdots, \tilde{\beta}^{n})$ with $\tilde{\beta}^{k}:=h^{\ast}_{p_{k}}(Du^{\ast})$.
Using the representation as the works of \cite{HO},\cite{OK},\cite{HRY},
we also define
$$\tilde{\omega}=\langle\tilde{\beta}, \tilde{\nu}\rangle+\tilde{\tau} h^{\ast}(Du^{\ast}),$$
in which
$$\langle\tilde{\beta}, \tilde{\nu}\rangle=\langle\beta, \nu\rangle,$$
and $\tilde{\tau}$ is a  positive constant to be determined.
Denote $y_{0}=Du(x_{0})$. Then $$\tilde{\omega}(y_{0})=\omega(x_{0})=\min_{\partial\tilde{\Omega}} \tilde{\omega}.$$ Using the same methods, under the assumption of
\begin{equation}\label{e3.11}
\tilde{\omega}_{n}(y_{0})\geq -C,
\end{equation}
we obtain the non-negative lower bounds of $h^{\ast}_{p_{k}}h^{\ast}_{p_{l}}u^{\ast}_{kl}$ or
$$h_{p_{k}}(Du)\nu_{k}=h^{\ast}_{p_{k}}(Du^{\ast})\tilde{\nu}_{k}=h^{\ast}_{p_{n}}\geq\sqrt{\frac{1}{2}+\frac{C^4}{4}}-\frac{C^2}{2}.$$
On the other hand, one can easily check that
$$h^{\ast}_{p_{k}}h^{\ast}_{p_{l}}u^{\ast}_{kl}=\nu_{i}\nu_{j}u^{ij}.$$
Then by the non-negative lower bounds of $h_{p_{k}}h_{p_{l}}u_{kl}$ and $h^{\ast}_{p_{k}}h^{\ast}_{p_{l}}u^{\ast}_{kl}$, the desired conclusion can be obtained by
\begin{equation*}
\langle \beta,\nu\rangle=\sqrt{h_{p_k}h_{p_l}u_{kl}u^{ij}\nu_i\nu_j}.
\end{equation*}
It remains to prove the key estimates (\ref{e3.7}) and (\ref{e3.11}).

 At first we give the proof of (\ref{e3.7}). By (\ref{e2.11}), Lemma \ref{l3.4} and the boundedness of $G_{i}$, we have
\begin{equation}\label{e3.13}
 \begin{aligned}
\mathcal{L}\omega=&G_{ij}u_{il}u_{jm}(h_{p_{k}p_{l}p_{m}}\nu_{k}+\tau h_{p_{l}p_{m}})\\
&+2G_{ij}h_{p_{k}p_{l}}u_{li}\nu_{kj}+G_{ij}h_{p_{k}}\nu_{kij}+G_{i}h_{p_{k}}\nu_{ki}\\
\leq& (h_{p_{k}p_{l}p_{m}}\nu_{k}+\tau h_{p_{l}p_{m}}+\delta_{lm})G_{ij}u_{il}u_{jm}+C_{2}\mathcal{T}_{G}+C_{3},
 \end{aligned}
\end{equation}
where $\mathcal{L}=G_{ij}\partial_{ij}+G_{i}\partial_{i}$ and
$$2G_{ij}h_{p_{k}p_{l}}u_{li}\nu_{kj}\leq  G_{ij}u_{im}u_{mj}+C_{2}\mathcal{T}_{G}.$$
Since $D^{2}h\leq-\theta I$, we may choose $\tau$ large enough depending on the known data such that
$$(h_{p_{k}p_{l}p_{m}}\nu_{k}+\tau h_{p_{l}p_{m}}+\delta_{lm})<0.$$
Consequently, we deduce that
\begin{equation}\label{e3.14}
\mathcal{L}\omega\leq C_{4}\mathcal{T}_{G} \    \ in \   \ \Omega.
\end{equation}
by the convexity of $u$.

Denote a neighborhood of $x_{0}$ in $\Omega$ by
$$\Omega_{r}:=\Omega\cap B_{r}(x_{0}),$$
where $r$ is a positive constant such that $\nu$ is well defined in $\Omega_{r}$. In order to obtain the desired results, it suffices to consider the auxiliary function
$$\Phi(x)=\omega(x)-\omega(x_{0})+\sigma\ln(1+kh^{\ast}(x))+A|x-x_{0}|^{2},$$
where $\sigma$, $k$ and $A$ are positive constants to be determined.
By  $h^{\ast}$ being the defining function of $\Omega$ and $G_{i}$ being bounded one can show that
\begin{equation}\label{e3.15}
 \begin{aligned}
  \mathcal{L}(\ln(1+kh^{\ast}))&=G_{ij}\left(\frac{kh^{\ast}_{ij}}{1+kh^{\ast}}
  -\frac{kh^{\ast}_{i}}{1+kh^{\ast}}\frac{kh^{\ast}_{j}}{1+kh^{\ast}}\right)
  +G_{i}\frac{kh^{\ast}_{i}}{1+kh^{\ast}}\\
  &\triangleq G_{ij}\frac{kh^{\ast}_{ij}}{1+kh^{\ast}}-G_{ij}\eta_{i}\eta_{j}+G_{i}\eta_{i}\\
  &\leq\left(-\frac{k\tilde{\theta}}{1+kh^{\ast}}+C_{5}-C_{6}|\eta-C_{7}I|^{2}\right)\mathcal{T}_{G}\\
  &\leq\left(-\frac{k\tilde{\theta}}{1+kh^{\ast}}+C_{5}\right)\mathcal{T}_{G},
  \end{aligned}
\end{equation}
where $\eta=\left(\frac{kh^{\ast}_{1}}{1+kh^{\ast}}, \frac{kh^{\ast}_{2}}{1+kh^{\ast}},\cdots, \frac{kh^{\ast}_{n}}{1+kh^{\ast}}\right)$.

By taking $r$ to be small enough such that we have
\begin{equation}\label{e3.16}
 \begin{aligned}
0\leq h^{\ast}(x)&=h^{\ast}(x)-h^{\ast}(x_{0})\\
&\leq \sup_{\Omega_{r}}|Dh^{\ast}||x-x_{0}|\\
&\leq r\sup_{\Omega}|Dh^{\ast}|\leq \frac{\tilde{\theta}}{3C_{5}}.
 \end{aligned}
\end{equation}

By choosing $k=\frac{7C_{5}}{\tilde{\theta}}$ and applying (\ref{e3.16}) to (\ref{e3.15}) we obtain
\begin{equation}\label{e3.17}
  \mathcal{L}(\ln(1+kh^{\ast}))
  \leq -C_{5}\mathcal{T}_{G}.
\end{equation}
Combining (\ref{e3.14}) with (\ref{e3.17}), a direct computation yields
\begin{equation*}
\mathcal{L}(\Phi(x))\leq (C_{4}-\sigma C_{5}+2A+2AC_{8})\mathcal{T}_{G}.
\end{equation*}
On $\partial\Omega$, it is clear that $\Phi(x)\geq0$.
Because $\omega$ is bounded, then it follows that we can choose $A$ large enough depending on the known data such that on $\Omega\cap\partial B_{r}(x_{0})$,
\begin{equation*}
 \begin{aligned}
  \Phi(x)&=\omega(x)-\omega(x_{0})+\sigma\ln(1+kh^{\ast})+Ar^{2}\\
  &\geq\omega(x)-\omega(x_{0})+Ar^{2}\geq 0.
  \end{aligned}
\end{equation*}
Let
$$\sigma=\frac{C_{4}+2A+2AC_{8}}{C_{5}},$$
Consequently,
\begin{equation}\label{e3.18}
\left\{ \begin{aligned}
   \mathcal{L}\Phi&\leq 0,\ \  &&x\in\Omega_{r},\\
   \Phi&\geq 0,\ \  &&x\in\partial\Omega_{r}.
                          \end{aligned} \right.
\end{equation}
According to the maximum principle, it follows that
$$\Phi|_{\Omega_{r}}\geq \min_{\partial\Omega_{r}}\Phi\geq 0.$$
By the above inequality and $\Phi(x_0)=0$, we have $\partial_n\Phi(x_0)\geq 0$, which gives the desired estimate (\ref{e3.7}).

Finally, we are turning to the proof of (\ref{e3.11}). The proof of (\ref{e3.11}) is similar to that of (\ref{e3.7}). Define
$$\mathcal{\tilde{L}}=G^{\ast}_{ij}\partial_{ij}.$$
From (\ref{e2.27}) and $G^{\ast}_{y_{k}}$ being bounded, we get
\begin{equation*}
 \begin{aligned}
\mathcal{\tilde{L}}\tilde{\omega}=&G^{\ast}_{ij}u^{\ast}_{li}u^{\ast}_{mj}(h^{\ast}_{q_{k}q_{l}q_{m}}\tilde{\nu}_{k}+\tilde{\tau} h^{\ast}_{q_{l}q_{m}})+2G^{\ast}_{ij}h^{\ast}_{q_{k}q_{l}}u^{\ast}_{li}\tilde{\nu}_{kj}\\
&-G^{\ast}_{y_{k}}(h^{\ast}_{q_{k}q_{m}}\tilde{\nu}_{m}+\tilde{\tau} h^{\ast}_{q_{k}})+G^{\ast}_{ij}h^{\ast}_{q_{k}}\tilde{\nu}_{kij}\\
\leq&(h^{\ast}_{q_{k}q_{l}q_{m}}\tilde{\nu}_{k}+\tilde{\tau} h^{\ast}_{q_{l}q_{m}}+\delta_{lm})G^{\ast}_{ij}u^{\ast}_{il}u^{\ast}_{jm}+C_{9}\mathcal{T}^{\ast}_{G}+C_{10}(1+\tilde{\tau}),
 \end{aligned}
\end{equation*}
where
$$2G^{\ast}_{ij}h^{\ast}_{q_{k}q_{l}}u^{\ast}_{li}\tilde{\nu}_{kj}\leq \delta_{lm}G^{\ast}_{ij}u^{\ast}_{il}u^{\ast}_{jm}
+C_{9}\mathcal{T}^{\ast}_{G},$$
by Lemma \ref{l3.4}. Since $D^{2}h^{\ast}\leq-\tilde{\theta}I$, we only need to choose $\tilde{\tau}$ sufficiently large depending on the known data such that
$$h^{\ast}_{q_{k}q_{l}q_{m}}\tilde{\nu}_{k}+\tilde{\tau} h^{\ast}_{q_{l}q_{m}}+\delta_{lm}<0.$$
Therefore,
\begin{equation}\label{e3.19}
\mathcal{\tilde{L}}\tilde{\omega}\leq C_{11}\mathcal{T}^{\ast}_{G}.
\end{equation}
by the convexity of $u^{\ast}$.

Denote a neighborhood of $y_{0}$ in $\tilde{\Omega}$ by
$$\tilde{\Omega}_{\rho}:=\tilde{\Omega}\cap B_{\rho}(y_{0}),$$
where $\rho$ is a positive constant such that $\tilde{\nu}$ is well defined in $\tilde{\Omega}_{\rho}$. In order to obtain the desired results,
we  consider the  auxiliary function
$$\Psi(y)=\tilde{\omega}(y)-\tilde{\omega}(y_{0})+\tilde{k}h(y)+\tilde{A}|y-y_{0}|^{2},$$
where $\tilde{k}$ and $\tilde{A}$ are positive constants to be determined. It is easy to check that $\Psi(y)\geq0$ on $\partial\tilde{\Omega}$. Now that $\tilde{\omega}$ is bounded, it follows that we can choose $\tilde{A}$ large enough depending on the known data such that on $\tilde{\Omega}\cap\partial B_{\rho}(y_{0})$,
\begin{equation*}
  \Psi(y)=\tilde{\omega}(y)-\tilde{\omega}(y_{0})+\tilde{k}h(y)+\tilde{A}\rho^{2}\geq \tilde{\omega}(y)-\tilde{\omega}(y_{0})
  +\tilde{A}\rho^{2}\geq 0.
\end{equation*}
It follows from $D^{2}h\leq-\theta I$ that
$$\mathcal{\tilde{L}}(\tilde{k}h(y)+\tilde{A}|y-y_{0}|^{2})\leq(-\tilde{k}\theta+2\tilde{A})\mathcal{T}^{\ast}_{G}.$$
Then by (\ref{e3.19}) and choosing $\tilde{k}=\frac{2\tilde{A}+C_{11}}{\theta}$ we obtain
$$\mathcal{\tilde{L}}\Psi(y)\leq0.$$
Consequently,
\begin{equation*}
\left\{ \begin{aligned}
   \mathcal{\tilde{L}}\Psi&\leq 0,\ \  &&y\in\tilde{\Omega}_{\rho},\\
   \Psi&\geq 0,\ \  &&y\in\partial\tilde{\Omega}_{\rho}.
                          \end{aligned} \right.
\end{equation*}
The rest of the proof of (\ref{e3.11}) is the same as (\ref{e3.7}). Thus the proof of (\ref{e3.4}) is completed.
\end{proof}
\begin{rem}
The above detail proof involves that the estimate is independent on the upper bound of $\mathcal{T}_{G}$ and $\mathcal{T}_{G^{*}}$,i.e we need not the upper bound of
$\sum^{n}_{i=1}\frac{\partial F}{\partial \kappa_{i}}$  and the lower bound of $\sum^{n}_{i=1}\frac{\partial F}{\partial \kappa_{i}}\kappa^{2}_{i}$ in Lemma \ref{l2.1}.
\end{rem}

\section{The second derivative estimate}

Before deriving the global $C^2$ estimate, we first introduce a useful definition that provides a basic connection between (\ref{e1.3})-(\ref{e1.4}) and (\ref{e3.3}) and which will be useful for the sequel.
\begin{deff}\label{d1.10}
We say that  $u^{*}$ in (\ref{e3.10}) is a dual solution to (\ref{e3.3}).
\end{deff}
To carry out the global $C^2$ estimate, we use the following strategy that is to reduce the $C^2$ global estimate of $u$ and $u^{*}$ to the boundary. By differentiating the boundary condition $h(Du)=0$ in any tangential direction $\varsigma$, we have
\begin{equation}\label{e4.1}
   u_{\beta \varsigma}=h_{p_k}(Du)u_{k\varsigma}=0.
\end{equation}
However, the second order derivative of $u$ on the boundary is controlled by $u_{\beta \varsigma}$, $u_{\beta \beta}$ and $u_{\varsigma\varsigma}$. We now give the arguments as in \cite{JU} and one can see there for more details. At $x\in \partial\Omega$, any unit vector $\xi$ can be written in terms of a tangential component $\varsigma(\xi)$ and a component in the direction $\beta$ by
\begin{equation*}
\xi=\varsigma(\xi)+\frac{\langle \nu,\xi\rangle}{\langle\beta,\nu\rangle}\beta,
\end{equation*}
where  $\nu$ is an unit normal vector of $\partial\Omega$  and
$$\varsigma(\xi):=\xi-\langle \nu,\xi\rangle \nu-\frac{\langle \nu,\xi\rangle}{\langle\beta,\nu\rangle}\beta^T,\qquad
\beta^T:=\beta-\langle \beta,\nu\rangle \nu.$$
Since (\ref{e3.4}), we see that in fact
\begin{equation}\label{e4.3}
\begin{aligned}
|\varsigma(\xi)|^{2}&=1-\left(1-\frac{|\beta^{T}|^{2}}{\langle\beta,\nu\rangle^{2}}\right)\langle\nu,\xi\rangle^{2}
-2\langle\nu,\xi\rangle\frac{\langle\beta^{T},\xi\rangle}{\langle\beta,\nu\rangle}\\
&\leq 1+C\langle\nu,\xi\rangle^{2}-2\langle\nu,\xi\rangle\frac{\langle\beta^{T},\xi\rangle}{\langle\beta,\nu\rangle}\\
&\leq C.
\end{aligned}
\end{equation}
Denote $\varsigma:=\frac{\varsigma(\xi)}{|\varsigma(\xi)|}$, then we combine (\ref{e4.1}), (\ref{e4.3}) and (\ref{e3.4}) to obtain
\begin{equation}\label{e4.4}
\begin{aligned}
u_{\xi\xi}&=|\varsigma(\xi)|^{2}u_{\varsigma\varsigma}
+2|\varsigma(\xi)|\frac{\langle\nu,\xi\rangle}{\langle\beta,\nu\rangle}u_{\beta\varsigma}
+\frac{\langle\nu,\xi\rangle^{2}}{\langle\beta,\nu\rangle^{2}}u_{\beta\beta}\\
&=|\varsigma(\xi)|^{2}u_{\varsigma\varsigma}+\frac{\langle\nu,\xi\rangle^{2}}{\langle\beta,\nu\rangle^{2}}
u_{\beta\beta}\\
&\leq C(u_{\varsigma\varsigma}+u_{\beta\beta}),
\end{aligned}
\end{equation}
where $C$ depends only on $\Omega$, $\tilde \Omega$. Therefore, we only need to estimate $u_{\beta\beta}$ and $u_{\varsigma\varsigma}$ respectively.
\begin{lemma}\label{lem4.2}
If $u$ is a smooth uniformly convex solution of (\ref{e3.3}),  then there exists a positive constant $C_{12}$  such that
\begin{equation}\label{e4.5}
   0\leq \sup_{\partial\Omega}u_{\beta\beta}\leq C_{12}.
\end{equation}
\end{lemma}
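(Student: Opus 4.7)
The lower bound $u_{\beta\beta}(x) \geq 0$ on $\partial\Omega$ is an immediate consequence of the uniform convexity of $u$: since $D^2u \geq 0$ pointwise, $u_{\beta\beta}(x) = \beta^i\beta^j u_{ij}(x) \geq 0$ for every $x \in \bar\Omega$.

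The upper bound is the serious part. The plan is to argue by the maximum principle in the spirit of J.~Urbas~\cite{J} and of Lemma~\ref{l3.5}. Let $x_0 \in \partial\Omega$ realize $M_0 := \max_{\partial\Omega} u_{\beta\beta}$, and rotate coordinates so that $\nu(x_0) = e_n$; by Lemma~\ref{l3.5} we have $\beta^n(x_0) \geq 1/C_1$. Extend $\beta$ to a tubular neighborhood of $\partial\Omega$ by $\beta^k(x) = h_{p_k}(Du(x))$, and introduce
$$w(x) = \beta^i(x)\beta^j(x) u_{ij}(x),$$
which equals $u_{\beta\beta}$ on $\partial\Omega$. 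Differentiating the boundary condition $h(Du)=0$ once tangentially yields $u_{\beta\varsigma} = 0$ on $\partial\Omega$ for every tangential $\varsigma$, and a second tangential differentiation produces algebraic relations among $h_{p_kp_l}$, $u_{k\varsigma}$ and the boundary curvatures of $\partial\Omega$ that will be exploited at~$x_0$.

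The first computational task is to estimate $\mathcal{L}w$, where $\mathcal{L} = G_{ij}\partial_{ij} + G_i\partial_i$. Applying the product rule and invoking the once-differentiated equation~(\ref{e2.11}), $G_{ij}u_{ijk} + G_k = 0$, together with the boundedness of $G_i$ from Section~2 and the concavity of $F$, I expect a bound of the form
$$\mathcal{L}w \;\leq\; C_{13}\bigl(\mathcal{T}_G + 1\bigr),$$
in which the third-derivative cross terms of shape $G_{ij} u_{ik\beta} u_{jk\beta}$ produced by the two-fold differentiation of $\beta^i\beta^j u_{ij}$ are absorbed by the Cauchy--Schwarz inequality of Lemma~\ref{l3.4}. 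Pairing with the strict supersolution $\ln(1 + kh^*)$, which satisfies $\mathcal{L}(\ln(1+kh^*)) \leq -C_5 \mathcal{T}_G$ exactly as in~(\ref{e3.17}), I form the auxiliary function
$$\Phi(x) = w(x) - w(x_0) + \sigma\ln\bigl(1 + kh^*(x)\bigr) + A|x - x_0|^2$$
on the half-ball $\Omega_r := \Omega \cap B_r(x_0)$, and choose $k, \sigma, A$ large enough that $\mathcal{L}\Phi \leq 0$ in $\Omega_r$ while $\Phi \geq 0$ on $\partial\Omega_r$.

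The maximum principle then forces $\Phi \geq 0$ in $\bar\Omega_r$, while $\Phi(x_0) = 0$, so $\partial_n\Phi(x_0) \geq 0$, i.e.\ $\partial_n w(x_0) \geq -C$. Combining this Hopf-type inequality with (a) the tangential relations $u_{\beta\varsigma}(x_0)=0$ and the twice-tangentially-differentiated boundary condition, (b) the strict obliqueness $\beta^n(x_0) \geq 1/C_1$ from Lemma~\ref{l3.5}, and (c) the bounds on $G_i$ and $\mathcal{T}_G$ from Section~2, yields $u_{\beta\beta}(x_0) \leq C_{12}$ with $C_{12}$ depending only on $\Omega$ and $\tilde\Omega$. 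The most delicate step will be the estimate of $\mathcal{L}w$: the third-derivative products appearing when $\mathcal{L}$ acts on the extension $\beta^i\beta^j u_{ij}$ must be absorbed by a combination of the concavity of $F$ (cf.\ Lemma~\ref{l2.1}) and the Cauchy--Schwarz device of Lemma~\ref{l3.4}, and this is precisely the point where the argument mirrors~\cite{J} and~\cite{CHB}.
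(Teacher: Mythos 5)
Your lower bound $u_{\beta\beta}\geq 0$ by convexity is fine, but the proposed argument for the upper bound has both a structural gap and a sign problem, and it also takes a significantly harder route than the paper.

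The paper's proof does \emph{not} apply $\mathcal{L}$ to any extension of $u_{\beta\beta}$. It applies $\mathcal{L}$ to the simpler quantity $\tilde h := h(Du)$, which contains only first derivatives of $u$. The differentiated equation (\ref{e2.11}) then kills the third-order term, leaving exactly $\mathcal{L}\tilde h = G_{ij}u_{ik}u_{jl}h_{p_kp_l}$, which is a controlled quadratic expression in $D^2u$ and is bounded below by $-C$ via (\ref{e2.16}) and Lemma \ref{l2.1}(iii). Since both $\tilde h$ and the barrier $\varpi=\sigma\ln(1+\kappa h^*)$ vanish on $\partial\Omega$, the comparison $\varpi\geq\tilde h$ holds on all of $\Omega$ (no half-ball, no distinguished maximum point), and the desired bound drops out from $\varpi_\beta\geq\tilde h_\beta = h_{p_k}h_{p_l}u_{kl}=u_{\beta\beta}$ on $\partial\Omega$, with $\varpi_\beta$ a fixed smooth function.

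Your proposal instead applies $\mathcal{L}$ to $w=\beta^i\beta^ju_{ij}$, which is exactly $\tilde h_\beta$ and already contains second derivatives of $u$. Then $G_{ij}\partial_{ij}w$ produces \emph{fourth} derivatives of $u$, not the third-order cross terms $G_{ij}u_{ik\beta}u_{jk\beta}$ you mention. Controlling those requires twice-differentiating the equation and invoking the concavity of $F$ in the form $F_{ij,kl}\eta_{ij}\eta_{kl}\leq 0$; this is precisely the heavy machinery the paper reserves for the tangential estimate (Lemmas \ref{lem4.3}--\ref{lem4.9}), and it is not carried out here, so the key claim $\mathcal{L}w\leq C(\mathcal{T}_G+1)$ is unsupported. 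Worse, there is a sign error in the barrier: $x_0$ realizes a boundary \emph{maximum} of $w$, so $w(x)-w(x_0)\leq 0$ on $\partial\Omega\cap B_r(x_0)$, and $\Phi=w-w(x_0)+\sigma\ln(1+kh^*)+A|x-x_0|^2\geq 0$ is not guaranteed there -- the paper's analogous construction in Lemma \ref{l3.5} works because $x_0$ is a boundary \emph{minimum} of $\omega$. Finally, even granting everything, your conclusion $\partial_nw(x_0)\geq -C$ is a lower bound on a normal derivative of $w$; it does not by itself bound $w(x_0)=u_{\beta\beta}(x_0)$. For the $\varsigma\varsigma$-direction the paper closes this gap via the quadratic relation $\theta M^2\leq CM$ obtained by twice-differentiating $h(Du)=0$ tangentially, but no analogous mechanism is supplied in your $\beta\beta$-argument. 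You should instead follow the paper: run the comparison for $\tilde h=h(Du)$ directly, where the third-order terms cancel for free.
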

\begin{proof}
Let $\tilde{h}=h(Du)$ and denote $\mathcal{L}=G_{ij}\partial_{ij}+G_{i}\partial_{i}$ as (\ref{e3.13}). We arrive at
\begin{equation*}
 \begin{aligned}
\mathcal{L}\tilde{h}&=G_{ij}\partial_{ij}\tilde{h}+G_{i}\partial_{i}\tilde{h}\\
&=h_{p_{k}}G_{ij}u_{ijk}+h_{p_{k}}G_{i}u_{ik}+G_{ij}u_{ik}u_{jl}h_{p_{k}p_{l}}\\
&=G_{ij}u_{ik}u_{jl}h_{p_{k}p_{l}}\\
&\geq -C,
 \end{aligned}
\end{equation*}
by (\ref{e2.11}), (\ref{e2.16}) and Lemma \ref{l2.1} (iii) for some positive constant $C$ depending only on the known data. One can define the auxiliary function as follows
\begin{equation*}
\varpi=\sigma\ln(1+\kappa h^{\ast}(x)).
\end{equation*}
By the computations in (\ref{e3.17}), there exist some constants $\sigma$ and $\kappa$ such that
$$\mathcal{L}\varpi\leq \mathcal{L}\tilde{h},\quad \mathrm{on}\quad \Omega.$$
It follows from $\varpi|_{\partial\Omega}=\tilde{h}_{\partial\Omega}=0$ and the maximum principle that
$$\varpi\geq\tilde{h},\quad \mathrm{on}\quad \Omega.$$
We observe that
$$\varpi|_{\partial\Omega}=\tilde{h}|_{\partial\Omega}=0.$$
 Then we arrive at
$$\varpi_{\beta}\geq \tilde{h}_{\beta}=u_{\beta\beta},\quad \mathrm{on}\quad \partial\Omega.$$
\end{proof}
To bound the remaining second derivatives one can carry out some computations on the graph $\Gamma$ as same as \cite{J} by making use of local orthonormal frame fields.

In a neighbourhood of any point of $\Gamma$,  there exists a local orthonormal frame field $\hat{e}_{1},\hat{e}_{2},\cdots,\hat{e}_{n}$ on  $\Gamma$.
We denote covariant differentiation on $\Gamma$ in the direction $\hat{e}_{i}$ by $\nabla_{i}$. Let
$$\mu=\frac{(-Du,1)}{\sqrt{1+|Du|^{2}}},$$
denotes the upwards pointing unit normal vector field to $\Gamma$ and $[h_{ij}]$ denotes the second fundamental form of $\Gamma$, so that for the frame $\hat{e}_{1},\hat{e}_{2},\cdots,\hat{e}_{n}$ we have
$$h_{ij}=\langle D_{\hat{e}_{i}}\hat{e}_{j},\mu\rangle,$$
where $D$ denotes the usual gradient operator on $\mathbb{R}^{n+1}$.

In order to control the global second  derivative by its value on the boundary
we will need the following differential inequalities for  the mean curvature.
\begin{lemma}\label{lem4.3}
If $u$ is a smooth uniformly convex solution of (\ref{e3.3}),  then we have
\begin{equation}\label{e4.6}
F_{ij}\nabla_{i}\nabla_{j}H\geq 0,
\end{equation}
where $H=\sum_{i=1}^{n}h_{ii}$ is the mean curvature of $\Gamma$.
\end{lemma}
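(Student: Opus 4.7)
The plan is to compute $F_{ij}\nabla_i\nabla_j H$ by differentiating the curvature equation twice, invoking Simons' identity for hypersurfaces in $\mathbb{R}^{n+1}$, and then splitting the result into a manifestly non-negative concavity contribution plus a curvature contribution that turns out to be non-negative thanks to uniform convexity.

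First, since $F(h_{ij})=c$ on $\Gamma$, covariant differentiation once gives $F_{ij}\nabla_k h_{ij}=0$; differentiating once more and tracing in $k$ yields
\[
F_{ij}\Delta h_{ij} \;=\; -F_{ij,pq}\,\nabla_k h_{ij}\,\nabla_k h_{pq}\;\geq\; 0,
\]
by the concavity of $F$ on $\overline{\Gamma^+_n}$ recorded in Lemma \ref{l2.1}(i). Next, Simons' identity for a hypersurface in Euclidean space---a consequence of the Codazzi equations $\nabla_k h_{ij}=\nabla_i h_{jk}$, the Ricci identity, and the Gauss equation $R_{ijkl}=h_{ik}h_{jl}-h_{il}h_{jk}$---reads
\[
\Delta h_{ij}\;=\;\nabla_i\nabla_j H \;+\; H\,h_{ik}h_{kj} \;-\; |A|^2 h_{ij}.
\]
Contracting with $F_{ij}$ and rearranging produces
\[
F_{ij}\nabla_i\nabla_j H \;=\; F_{ij}\Delta h_{ij} \;+\; |A|^2 F_{ij}h_{ij} \;-\; H\,F_{ij}h_{ik}h_{kj}.
\]

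The step I expect to be the main point of the argument is showing that the last two terms are jointly non-negative. Choose a local orthonormal frame that diagonalizes $h_{ij}$ at the given point, so that $h_{ij}=\kappa_i\delta_{ij}$ and $F_{ij}=\delta_{ij}/(1+\kappa_i^2)$. Then
\[
|A|^2 F_{ij}h_{ij} \;-\; H\,F_{ij}h_{ik}h_{kj} \;=\; \sum_{i,k}\frac{\kappa_i\kappa_k(\kappa_k-\kappa_i)}{1+\kappa_i^2},
\]
and symmetrizing in the pair $(i,k)$ rewrites this as
\[
\frac{1}{2}\sum_{i,k}\kappa_i\kappa_k(\kappa_k-\kappa_i)\left(\frac{1}{1+\kappa_i^2}-\frac{1}{1+\kappa_k^2}\right) \;=\; \frac{1}{2}\sum_{i,k}\frac{\kappa_i\kappa_k(\kappa_i+\kappa_k)(\kappa_k-\kappa_i)^2}{(1+\kappa_i^2)(1+\kappa_k^2)}.
\]
Uniform convexity of $u$ forces $\kappa_i>0$ for every $i$, so each summand is non-negative. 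Combined with $F_{ij}\Delta h_{ij}\geq 0$, this yields $F_{ij}\nabla_i\nabla_j H\geq 0$, completing the argument. The delicate point is recognizing that the symmetrization, together with the monotone decrease of $1/(1+\kappa_i^2)$ in $\kappa_i$ and the positivity of the $\kappa_i$, converts an a priori sign-ambiguous expression into a sum of squares with non-negative weights.
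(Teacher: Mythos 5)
Your proof is correct and follows essentially the same route as the paper: both differentiate $F[A]=c$ twice, invoke the Simons-type identity (the paper derives it directly from Codazzi, Ricci, and Gauss), split off the concavity term $-F_{ij,pq}\nabla_k h_{ij}\nabla_k h_{pq}\geq 0$, and then show the remaining expression $|A|^2F_{ij}h_{ij}-H\,F_{ij}h_{ik}h_{kj}$ is non-negative by diagonalizing, symmetrizing in $(i,k)$, and using $\kappa_i>0$. The only difference is cosmetic: you package the commutation step as ``Simons' identity'' while the paper carries out the curvature commutation by hand.
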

\begin{proof}
First, we need to deal with  $F_{ij}\nabla_{i}\nabla_{j}h_{ll}$ in the similar way as in \cite{J2}. Applying $\Delta=\nabla_{l}\nabla_{l}$ to $F[A]=c$, we have
$$F_{ij,rs}\nabla_{l}h_{rs}\nabla_{l}h_{ij}+F_{ij}\nabla_{l}\nabla_{l}h_{ij}=0.$$
The next  procedure in the proof is to calculate  $\nabla_{l}\nabla_{l}h_{ij}$ by using the Codazzi equations which tell us that $\nabla_{l}h_{ij}$ is symmetric in all indices, together with the standard formula for handling covariant derivatives, and Gauss equations
$$R_{ijkl}=h_{ik}h_{jl}-h_{il}h_{jk},$$
where $R_{ijkl}$ denotes the curvature tensor on $\Gamma$. Direct computations yield
\begin{equation*}
 \begin{aligned}
\nabla_{l}\nabla_{l}h_{ij}&=\nabla_{l}\nabla_{i}h_{jl}\\
&=\nabla_{i}\nabla_{l}h_{jl}+R_{lijm}h_{ml}+R_{lilm}h_{mj}\\
&=\nabla_{i}\nabla_{j}h_{ll}+h_{lj}h_{im}h_{ml}-h_{lm}h_{ij}h_{ml}+h_{ll}h_{im}h_{mj}-h_{lm}h_{il}h_{mj}\\
&=\nabla_{i}\nabla_{j}h_{ll}+h_{ll}h_{im}h_{mj}-h_{lm}h_{ij}h_{ml},\\
 \end{aligned}
\end{equation*}
so that, we obtain
\begin{equation}\label{e4.7}
F_{ij}\nabla_{i}\nabla_{j}h_{ll}=-F_{ij,rs}\nabla_{l}h_{rs}\nabla_{l}h_{ij}-F_{ij}h_{im}h_{mj}h_{ll}+F_{ij}h_{ij}h_{lm}h_{lm}.
\end{equation}
 Thus  summing over $l=1,\ldots,n$ in (\ref{e4.7}) we have
\begin{equation}\label{e4.8}
F_{ij}\nabla_{i}\nabla_{j}H=-F_{ij,rs}\nabla_{l}h_{rs}\nabla_{l}h_{ij}-F_{ij}h_{im}h_{mj}H+F_{ij}h_{ij}h_{lm}h_{lm}.
\end{equation}

The first term on the right hand side of (\ref{e4.8}) is non-negative  due to the concavity of $F$. Therefore, one only need to consider
$$P:=F_{ij}h_{ij}h_{lm}h_{lm}-HF_{ij}h_{im}h_{mj},$$
being non-negative everywhere.

Without loss of generality, we may assume that $\hat{e}_{1},\ldots,\hat{e}_{n}$ have been chosen such that $[h_{ij}]$ is diagonal at the point at which we are computing with eigenvalues $\kappa_{1},\ldots,\kappa_{n}$. Hence we finally conclude that
\begin{equation*}
 \begin{aligned}
P&=\left(\sum\frac{\partial F}{\partial \kappa_{i}}\kappa_{i}\right)\left(\sum\kappa^{2}_{j}\right)-\left(\sum\kappa_{i}\right)\left(\sum\frac{\partial F}{\partial \kappa_{j}}\kappa^{2}_{j}\right)\\
&=\sum_{i,j}\left(\frac{\partial F}{\partial \kappa_{i}}\kappa_{i}\kappa^{2}_{j}-\frac{\partial F}{\partial \kappa_{j}}\kappa^{2}_{j}\kappa_{i}\right)\\
&=\frac{1}{2}\sum_{i,j}\left(\frac{\partial F}{\partial \kappa_{j}}-\frac{\partial F}{\partial \kappa_{i}})(\kappa_{i}-\kappa_{j}\right)\kappa_{i}\kappa_{j}\\
&=\frac{1}{2}\sum_{i,j}\left(\frac{1}{1+\kappa^{2}_{j}}-\frac{1}{1+\kappa^{2}_{i}}\right)\left(\kappa_{i}-\kappa_{j}\right)\kappa_{i}\kappa_{j}\\
&=\frac{1}{2}\sum_{i,j}\frac{1}{1+\kappa^{2}_{j}}\frac{1}{1+\kappa^{2}_{i}}\left(\kappa_{i}
+\kappa_{j}\right)\left(\kappa_{i}-\kappa_{j}\right)^{2}\kappa_{i}\kappa_{j}\\
&\geq 0,
 \end{aligned}
\end{equation*}
because the last estimate is positive by the convexity of $\Gamma$. The desired conclusion follow from the above.
\end{proof}

\begin{lemma}\label{lem4.4}
If $u$ is a smooth uniformly convex solution of (\ref{e3.3}).  Then  we have
\begin{equation}\label{e4.9}
\sup_{\Gamma}H\leq \sup_{\partial\Gamma}H.
\end{equation}
\end{lemma}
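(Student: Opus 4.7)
The plan is to read Lemma \ref{lem4.4} as a maximum principle statement for the linear elliptic operator $L := F_{ij}\nabla_i\nabla_j$ acting on functions on the Riemannian manifold $\Gamma$ (endowed with the induced metric $g_{ij}$), and to apply it to the subsolution $H$ furnished by Lemma \ref{lem4.3}.

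First I would observe that in any local orthonormal frame $\hat{e}_1,\ldots,\hat{e}_n$ on $\Gamma$, the coefficient matrix $[F_{ij}]=[F_{ij}[A]]$ is symmetric, and by Lemma \ref{l2.1}(i) its eigenvalues equal $\partial F/\partial\kappa_i>0$ at the point where $[h_{ij}]$ (equivalently $[a_{ij}]$) is diagonal. Since under an orthogonal change of frame $[F_{ij}]$ transforms by the corresponding orthogonal conjugation, $[F_{ij}]$ is positive definite in every local orthonormal frame; hence $L=F_{ij}\nabla_i\nabla_j$ is a genuine linear second order elliptic operator (with no zeroth order term) on the compact manifold with boundary $\overline{\Gamma}$.

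Next I would invoke Lemma \ref{lem4.3}, which gives $L H = F_{ij}\nabla_i\nabla_j H\ge 0$ throughout $\Gamma$. Thus $H$ is an $L$-subsolution on $\overline{\Gamma}$. The classical Hopf/weak maximum principle for linear elliptic operators without zeroth order term on a compact Riemannian manifold with boundary (expressed in local coordinates it is just the standard elliptic maximum principle applied chart by chart, since $\Gamma$ is smooth and compact and the metric $g_{ij}$ is smooth) then implies
\begin{equation*}
\sup_{\overline{\Gamma}} H = \sup_{\partial\Gamma} H,
\end{equation*}
which is exactly (\ref{e4.9}).

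There is really no obstacle here once Lemma \ref{lem4.3} is in hand; the only point requiring care is the frame-independent positive definiteness of $[F_{ij}]$, and this is automatic because $F$ is a symmetric function of the $\kappa_i$ with strictly positive partials on $\Gamma^+_n$, so that the symbol of $L$ coincides, in a frame diagonalizing $[h_{ij}]$, with the positive diagonal matrix $\mathrm{diag}(\partial F/\partial\kappa_1,\ldots,\partial F/\partial\kappa_n)$, and ellipticity is an intrinsic (orthogonally invariant) property.
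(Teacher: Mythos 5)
Your argument is exactly the paper's: Lemma \ref{lem4.3} makes $H$ a subsolution of the elliptic operator $F_{ij}\nabla_i\nabla_j$ on $\Gamma$, and the weak maximum principle then gives \eqref{e4.9}. The extra remarks on frame-independent ellipticity of $[F_{ij}]$ are correct but only flesh out what the paper leaves implicit.
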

\begin{proof}
Using (\ref{e4.6}) and the maximum principle, the estimate (\ref{e4.9}) follows immediately.
\end{proof}

In order to control  the global second derivatives of $u$ on the boundary, we adopt the following notation. For any unit vector $\xi\in \mathbb{R}^{n}$, one can define
\begin{equation}\label{e4.10}
\tilde{\xi}=\frac{(\xi,D_{\xi}u)}{\sqrt{1+|D_{\xi}u|^{2}}},
\end{equation}
then $\tilde{\xi}$ is a unit tangent vector to $\Gamma$. So that (\ref{e4.10}) gives a one to one
correspondence from $\mathbb{R}^{n}$ to the tangent space $T_{x}\Gamma$ at $(x,u(x))$.
One can recall the  explicit expressions  of the normal curvature $k(\tilde{\xi})$ of $\Gamma$ in the direction $\tilde{\xi}$
as follows
\begin{equation}\label{e4.11}
k(\tilde{\xi})=\frac{D_{\xi\xi}u}{\sqrt{1+|Du|^{2}}(1+|D_{\xi}u|^{2})}.
\end{equation}
For the above details, we refer to the computations of \cite{J1}, Sect.2.
\begin{lemma}\label{lem4.5}
If $u$ is a smooth uniformly convex solution of (\ref{e3.3}).  Then  we have
\begin{equation}\label{e4.12}
\sup_{\Omega}|D^{2}u|\leq C_{13}\sup_{\partial\Omega}|D^{2}u|.
\end{equation}
\end{lemma}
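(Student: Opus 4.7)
The plan is to use the mean curvature $H$ of the graph $\Gamma$ as a bridge between $|D^2 u|$ and its boundary values. The strategy has three parts: establish $|D^2 u| \leq C H$ globally on $\bar\Omega$, establish $H \leq C' |D^2 u|$ on $\partial\Omega$, and then invoke Lemma \ref{lem4.4} to transport the interior supremum of $H$ to the boundary.

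For the global upper bound $|D^2 u| \leq CH$, I would fix $x \in \bar\Omega$ and select a unit direction $\xi_0 \in \mathbb{R}^n$ realizing the operator norm $|D^2 u(x)|$; by the convexity of $u$, this norm equals $u_{\xi_0\xi_0}(x)$. Formula (\ref{e4.11}) rearranged reads $u_{\xi_0\xi_0} = v(1+|D_{\xi_0}u|^2)\,k(\tilde\xi_0)$, and since $|Du|$ is uniformly bounded by the diffeomorphism condition (\ref{e1.4}), one has $u_{\xi_0\xi_0} \leq C\,k(\tilde\xi_0)$ with $C$ depending only on the $C^1$ norm of $u$. In an orthonormal frame of principal directions, the second fundamental form diagonalizes to $\mathrm{diag}(\kappa_1,\ldots,\kappa_n)$, so $k(\tilde\xi_0)$ is a convex combination of the $\kappa_i$; hence $k(\tilde\xi_0) \leq \max_i \kappa_i$. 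Convexity of $u$ yields $\kappa_i \geq 0$, so $\max_i \kappa_i \leq \sum_i \kappa_i = H$. Chaining these gives $|D^2 u(x)| \leq C H(x)$ for every $x \in \bar\Omega$. For the boundary bound, I would use the direct identity $H = g^{ij}h_{ij} = \Delta u / v - u_i u_j u_{ij}/v^3$, immediate from the definitions in Section 2; since $v \geq 1$ and $|Du|$ is uniformly bounded, $|H| \leq C'|D^2 u|$ pointwise, in particular on $\partial\Omega$.

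Combining these estimates with Lemma \ref{lem4.4} produces
$$\sup_\Omega |D^2 u| \leq C \sup_\Omega H \leq C \sup_{\partial\Omega} H \leq CC' \sup_{\partial\Omega} |D^2 u|,$$
which is (\ref{e4.12}) with $C_{13} = CC'$. The main obstacle is conceptual rather than computational: one must recognize $H$ as the correct scalar quantity that is two-sided equivalent to $|D^2 u|$ (with equivalence constants depending only on the $C^1$ bound of $u$ and the convexity of $u$) and that enjoys the maximum principle supplied by Lemma \ref{lem4.4}. Once this pivot is identified, every remaining step reduces to elementary linear algebra together with the uniform gradient bound coming from (\ref{e1.4}).
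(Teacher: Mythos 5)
Your proposal is correct and follows essentially the same route as the paper: both arguments establish the two-sided comparison $\frac{1}{C}D_{\xi\xi}u \leq H \leq C|D^2u|$ (using the normal-curvature formula (\ref{e4.11}), the convexity of $\Gamma$, and the uniform gradient bound from (\ref{e1.4})) and then transport the supremum of $H$ to the boundary via Lemma \ref{lem4.4}. You simply spell out in more detail the linear-algebra steps (realizing the operator norm as $u_{\xi_0\xi_0}$, writing $k(\tilde\xi_0)$ as a convex combination of the $\kappa_i$, and the trace identity $H=g^{ij}h_{ij}$) that the paper leaves implicit.
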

\begin{proof}
According to the representation (\ref{e4.11}) of the normal curvature and the definition of the mean curvature, it follows the convexity of $\Gamma$ that
$$\frac{1}{C}D_{\xi\xi}u\leq H\leq C|D^{2}u|,$$
for any direction $\xi$ and some constant $C$ depending only on $\Omega$ and $\tilde{\Omega}$.
Thus from (\ref{e4.9}) we obtain
$$\sup_{\Omega}\frac{1}{C}D_{\xi\xi}u\leq\sup_{\Gamma} H\leq \sup_{\partial\Gamma}H\leq\sup_{\partial\Omega} C|D^{2}u|.$$
 This completes the proof of (\ref{e4.12}).
\end{proof}
\begin{lemma}\label{lem4.6}
If $u$ is a smooth uniformly convex solution of (\ref{e3.3}).  Then  we have
\begin{equation*}
\sup_{\Omega}|D^{2}u|\leq C_{14}\left(1+\sup_{x\in\partial\Omega,\varsigma\in T_{x}\partial\Omega, |\varsigma|=1}u_{\varsigma\varsigma}\right).
\end{equation*}
\end{lemma}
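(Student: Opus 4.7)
The plan is to chain together the three reductions already established in the paper: the global-to-boundary reduction (Lemma \ref{lem4.5}), the boundary decomposition estimate (\ref{e4.4}), and the pure normal bound (Lemma \ref{lem4.2}). Since $u$ is convex, $D^2 u(x)$ is positive semidefinite, so $|D^2 u(x)|$ equals its largest eigenvalue, which in turn equals $\sup_{|\xi|=1} u_{\xi\xi}(x)$. Thus the whole problem is to control $u_{\xi\xi}$ at boundary points in the directions $\xi$ that are neither purely tangential nor purely equal to $\beta$.

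First I would invoke Lemma \ref{lem4.5} to reduce the estimate to the boundary, namely
\begin{equation*}
\sup_{\Omega}|D^{2}u| \leq C_{13}\sup_{\partial\Omega}|D^{2}u|
 = C_{13}\sup_{x\in\partial\Omega,\,|\xi|=1} u_{\xi\xi}(x).
\end{equation*}
Next, fix $x\in\partial\Omega$ and a unit vector $\xi$, and use the decomposition $\xi=\varsigma(\xi)+\frac{\langle\nu,\xi\rangle}{\langle\beta,\nu\rangle}\beta$ together with the strict obliqueness (\ref{e3.4}) and the tangential vanishing $u_{\beta\varsigma}=0$ from (\ref{e4.1}); this is exactly the content of (\ref{e4.4}), which delivers
\begin{equation*}
u_{\xi\xi} \leq C\bigl(u_{\varsigma\varsigma} + u_{\beta\beta}\bigr),
\end{equation*}
with $\varsigma = \varsigma(\xi)/|\varsigma(\xi)|$ a unit tangent vector to $\partial\Omega$ and $C$ depending only on $\Omega$ and $\tilde{\Omega}$.

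Finally I would absorb the normal-normal piece using Lemma \ref{lem4.2}, which gives $u_{\beta\beta}\leq C_{12}$ on $\partial\Omega$. Combining the three estimates,
\begin{equation*}
\sup_\Omega |D^2u| \leq C_{13}\, C\left(C_{12} + \sup_{x\in\partial\Omega,\,\varsigma\in T_x\partial\Omega,\,|\varsigma|=1} u_{\varsigma\varsigma}(x)\right),
\end{equation*}
which is the desired inequality after renaming constants. The only slightly delicate point is observing that the tangent vector $\varsigma$ arising from the decomposition of an arbitrary unit $\xi$ is itself unit and lies in $T_x\partial\Omega$, so the supremum on the right-hand side of the conclusion is indeed the relevant one; everything else is a direct assembly of previously proven ingredients, and I do not expect any genuine obstacle here since the strict obliqueness already did the hard work of making the decomposition bounded.
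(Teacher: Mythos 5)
Your proposal is correct and follows essentially the same route as the paper: the paper's proof of Lemma \ref{lem4.6} likewise combines the boundary decomposition (\ref{e4.4}), the bound $u_{\beta\beta}\leq C_{12}$ from Lemma \ref{lem4.2}, and the global-to-boundary reduction of Lemma \ref{lem4.5}, merely presenting them in the reverse order (first bounding $u_{\xi\xi}$ on $\partial\Omega$ and then applying Lemma \ref{lem4.5}).
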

\begin{proof}
By virtue of (\ref{e4.4}) and (\ref{e4.5}) we see that for any direction $\xi$, on $\partial\Omega$,
\begin{equation*}
 \begin{aligned}
u_{\xi\xi}&=|\varsigma(\xi)|^{2}u_{\varsigma\varsigma}
+2|\varsigma(\xi)|\frac{\langle\nu,\xi\rangle}{\langle\beta,\nu\rangle}u_{\beta\varsigma}
+\frac{\langle\nu,\xi\rangle^{2}}{\langle\beta,\nu\rangle^{2}}u_{\beta\beta}\\
&\leq Cu_{\varsigma\varsigma}+C.
 \end{aligned}
\end{equation*}
Hence the desired result follows immediately by Lemma \ref{lem4.5}.
\end{proof}

The following equation will be useful later in this section.
\begin{lemma}\label{lem4.7}
If $u$ is a smooth uniformly convex solution of (\ref{e3.3}).  Then for any direction $\xi$ in $\mathbb{R}^{n+1}$, we have
\begin{equation}\label{e4.13}
F_{ij}\nabla_{i}\nabla_{j}\langle \mu,\xi\rangle+F_{ij}h_{ik}h_{jk}\langle \mu,\xi\rangle=0,
\end{equation}
where $$\mu=\frac{(-Du,1)}{\sqrt{1+|Du|^{2}}}.$$
\end{lemma}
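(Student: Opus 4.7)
The identity (\ref{e4.13}) is a standard Simons-type computation for the scalar function $\langle \mu, \xi \rangle$ on the hypersurface $\Gamma$, so the plan is to differentiate twice using the Weingarten and Gauss formulas and then exploit the constancy of $F[A]=c$ through the Codazzi equations.

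First, I would fix a point $p\in \Gamma$ and choose the local orthonormal frame $\hat{e}_{1},\ldots,\hat{e}_{n}$ so that it is a normal frame at $p$ (the tangential Christoffel symbols vanish at $p$) and so that $[h_{ij}]$ is diagonal at $p$ with eigenvalues $\kappa_{1},\ldots,\kappa_{n}$. Viewing $\langle \mu,\xi\rangle$ as a scalar function on $\Gamma$, the first covariant derivative is
\begin{equation*}
\nabla_{j}\langle \mu,\xi\rangle=\langle D_{\hat{e}_{j}}\mu,\xi\rangle=-h_{jk}\langle \hat{e}_{k},\xi\rangle,
\end{equation*}
where I used the Weingarten relation $D_{\hat{e}_{j}}\mu=-h_{jk}\hat{e}_{k}$ coming from differentiating $|\mu|^{2}=1$ together with the definition of $h_{ij}$.

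Next I would differentiate once more. Using the Gauss formula $D_{\hat{e}_{i}}\hat{e}_{k}=\nabla_{i}^{\Gamma}\hat{e}_{k}+h_{ik}\mu$, and working in the normal frame at $p$ so that $\nabla_{i}^{\Gamma}\hat{e}_{k}$ vanishes at $p$, I obtain
\begin{equation*}
\nabla_{i}\nabla_{j}\langle \mu,\xi\rangle=-(\nabla_{i}h_{jk})\langle \hat{e}_{k},\xi\rangle-h_{jk}h_{ik}\langle \mu,\xi\rangle.
\end{equation*}
Contracting against $F_{ij}$ gives
\begin{equation*}
F_{ij}\nabla_{i}\nabla_{j}\langle \mu,\xi\rangle=-F_{ij}(\nabla_{i}h_{jk})\langle \hat{e}_{k},\xi\rangle-F_{ij}h_{ik}h_{jk}\langle \mu,\xi\rangle,
\end{equation*}
so the claim reduces to showing that the first-order term vanishes.

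For that, I invoke the Codazzi equation $\nabla_{i}h_{jk}=\nabla_{k}h_{ij}$, which permits the symmetric exchange of indices, together with the hypothesis $F[A]=c$. Differentiating $F[A]=c$ tangentially yields
\begin{equation*}
0=\nabla_{k}F[A]=F_{ij}\nabla_{k}h_{ij}=F_{ij}\nabla_{i}h_{jk},
\end{equation*}
which kills the offending term and leaves exactly $F_{ij}\nabla_{i}\nabla_{j}\langle \mu,\xi\rangle+F_{ij}h_{ik}h_{jk}\langle \mu,\xi\rangle=0$ at the point $p$; since $p\in\Gamma$ was arbitrary, (\ref{e4.13}) follows. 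There is no genuine obstacle here — the only care that must be exercised is bookkeeping of signs in the Weingarten/Gauss formulas and making sure one differentiates in the ambient sense $D$ when acting on the vector field $\mu$ but switches to covariant differentiation $\nabla$ when tracking the scalar function $\langle \mu,\xi\rangle$; once normal coordinates are used at $p$ the identity falls out in two lines.
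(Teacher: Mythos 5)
Your proof is correct and follows essentially the same route as the paper: differentiate $\langle\mu,\xi\rangle$ twice using the Weingarten and Gauss formulas, invoke Codazzi to rearrange the resulting first-derivative term, and then use $F_{ij}\nabla_{k}h_{ij}=0$ (from $F[A]=c$) to kill it. The only difference is organizational: you work in a normal frame at $p$ so the Christoffel symbols drop out, whereas the paper carries all Christoffel terms explicitly and cancels them at the end — both land on the same line $\nabla_{i}\nabla_{j}\langle\mu,\xi\rangle=-\nabla_{l}h_{ij}\langle\hat{e}_{l},\xi\rangle-h_{ik}h_{jk}\langle\mu,\xi\rangle$, after which the argument is identical. (The diagonalization of $[h_{ij}]$ you mention is not used anywhere in this lemma and can be omitted.)
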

\begin{proof}
By similar proof as \cite{J2}, we may assume that the vector fields $\hat{e}_{1},...,\hat{e}_{n}$ and $\mu$ have been extended in a $C^{2}$
fashion so they form an orthonormal frame  in a neighbourhood of the point in $\mathbb{R}^{n+1}$  at which we are computing.
Since $\mu=\frac{(-Du,1)}{\sqrt{1+|Du|^{2}}}$, a direct calculation yields
$$ \langle D_{\hat{e}_{j}}\mu,\mu\rangle=0,$$
and then we arrive at
\begin{equation}\label{e4.14}
 \begin{aligned}
D_{\hat{e}_{j}}\langle\mu,\xi\rangle=\langle D_{\hat{e}_{j}}\mu,\xi\rangle&=\langle D_{\hat{e}_{j}}\mu,\hat{e}_{k}\rangle\langle \hat{e}_{k},\xi\rangle+\langle D_{\hat{e}_{j}}\mu,\mu\rangle\langle \mu,\xi\rangle\\
&=-\langle\mu,D_{\hat{e}_{j}}\hat{e}_{k}\rangle\langle \hat{e}_{k},\xi\rangle\\
&=-h_{jk}\langle \hat{e}_{k},\xi\rangle,\\
 \end{aligned}
\end{equation}
by $h_{jk}=\langle D_{\hat{e}_{j}}\hat{e}_{k},\mu\rangle$. We now calculate $\nabla_{i}\nabla_{j}\langle \mu,\xi\rangle$. First, we recall the formula
$$\nabla_{j}f=D_{\hat{e}_{j}}f,\ \mathrm{for} \ f\in C^{\infty}$$
and
$$\nabla_{i}U_{j}=D_{\hat{e}_{i}}U_{j}-\Gamma^{k}_{ij}U_{k},\ \mathrm{for }\ U_{j}=\langle U,\hat{e}_{j}\rangle$$
where $U$ is a vector field and $\Gamma^{k}_{ij}$ are the coefficients of connection on $\Gamma$.  Then
$$\nabla_{i}\nabla_{j}\langle \mu,\xi\rangle=\nabla_{i}D_{\hat{e}_{j}}\langle \mu,\xi\rangle=D_{\hat{e}_{i}}D_{\hat{e}_{j}}\langle \mu,\xi\rangle-\Gamma^{k}_{ij}D_{\hat{e}_{k}}\langle \mu,\xi\rangle,$$
where $\Gamma^{k}_{ij}=\langle D_{\hat{e}_{i}}\hat{e}_{j},\hat{e}_{k}\rangle$. Hence,
\begin{equation}\label{e4.15}
 \begin{aligned}
  \nabla_{i}\nabla_{j}\langle \mu,\xi\rangle=&-D_{\hat{e}_{i}}[h_{jk}\langle \hat{e}_{k},\xi\rangle]+\Gamma^{k}_{ij}h_{kl}\langle \hat{e}_{l},\xi\rangle\\
=&-D_{\hat{e}_{i}}h_{jk}\langle \hat{e}_{k},\xi\rangle-h_{jk}\langle D_{\hat{e}_{i}}\hat{e}_{k},\mu\rangle\langle\mu,\xi\rangle\\
&-h_{jk}\langle D_{\hat{e}_{i}}\hat{e}_{k},\hat{e}_{l}\rangle\langle\hat{e}_{l},\xi\rangle+\Gamma^{k}_{ij}h_{kl}\langle \hat{e}_{l},\xi\rangle\\
=&-D_{\hat{e}_{i}}h_{jk}\langle \hat{e}_{k},\xi\rangle-\Gamma^{l}_{ik}h_{jk}\langle\hat{e}_{l},\xi\rangle
-h_{jk}h_{ik}\langle\mu,\xi\rangle+\Gamma^{k}_{ij}h_{kl}\langle \hat{e}_{l},\xi\rangle\\
=&-D_{\hat{e}_{i}}h_{jl}\langle \hat{e}_{l},\xi\rangle+\Gamma^{k}_{il}h_{jk}\langle\hat{e}_{l},\xi\rangle+\Gamma^{k}_{ij}h_{kl}\langle \hat{e}_{l},\xi\rangle-h_{jk}h_{ik}\langle\mu,\xi\rangle\\
=&-\nabla_{i}h_{jl}\langle \hat{e}_{l},\xi\rangle-h_{jk}h_{ik}\langle\mu,\xi\rangle\\
=&-\nabla_{l}h_{ji}\langle \hat{e}_{l},\xi\rangle-h_{jk}h_{ik}\langle\mu,\xi\rangle.
 \end{aligned}
\end{equation}
In the last four lines of the above, we used the fact that $\Gamma^{l}_{ik}=-\Gamma^{k}_{il}$, the standard formula
$$\nabla_{i}h_{jl}=D_{\hat{e}_{i}}h_{jl}-\Gamma^{k}_{il}h_{jk}-\Gamma^{k}_{ij}h_{kl},$$
and the Codazzi equations. Furthermore, since $F[A]=c$, then $F_{ij}\nabla_{l}h_{ij}=0,$
i.e. $F_{ij}\nabla_{l}h_{ij}\langle\hat{e}_{l},\xi\rangle=0$. By substituting (\ref{e4.15}) into the equation we obtain the desired result.
\end{proof}

The following pre-knowledge was learned from \cite{J} for calculating the geometric quantity which satisfies the useful differential inequality on manifold.  By (\ref{e4.11}) we can make an assumption of
\begin{equation*}
\frac{k(\tilde{\xi})}{\mu_{n+1}}=\sup_{x\in\partial\Omega,\varsigma\in T_{x}\partial\Omega, |\varsigma|=1}\frac{D_{\varsigma\varsigma}u}{(1+|D_{\varsigma}u|^{2})}.
\end{equation*}
restricted to directions $\varsigma$  which are tangential to $\partial\Omega$  at $x$. Without loss of generality, we may take $x$ to be the origin
and it is often convenient to choose coordinate systems such that $\xi=e_{1}\triangleq (1,0,\cdots,0).$ Then we have
\begin{equation*}
\tilde{e}_{1}=\frac{(e_{1},D_{1}u)}{\sqrt{1+|D_{1}u|^{2}}}.
\end{equation*}
Let $\varsigma$ be a smooth unit tangent vector field on $\partial\Omega\cap B_{\rho}(0)$ for some $\rho>0$ such that
$\varsigma(0)=\tilde{e}_{1}$ and $\varsigma$ can been smoothly extended to $\bar{\Omega}\cap \overline{B_{\rho}(0)}$
with $|\varsigma(x)|\equiv1$ for any $x\in\bar{\Omega}\cap \overline{B_{\rho}(0)}$ where $\rho$ depends only on $\Omega$.

One can lift $\varsigma$ a vector field on $\Gamma$  by (\ref{e4.11}) and then we obtain a unit tangent
vector field  $\tilde{\varsigma}$ on $\Gamma_{\rho}\triangleq\Gamma\cap (\overline{{B}_{\rho}(0)}\times \mathbb{R})$ such that $\tilde{\varsigma}$  is
is tangential to $\partial\Gamma$ on $\Gamma_{\rho}\cap\partial\Gamma$.

By making use of Gram-Schmidt orthogonalization to the basis $\{\tilde{\varsigma}, \tilde{e}_{2},\cdots,\tilde{e}_{n}\}$
where
\begin{equation*}
\tilde{e}_{j}=\frac{(e_{j},D_{j}u)}{\sqrt{1+|D_{j}u|^{2}}} \,\qquad\,(j=2,3,\cdots,n)),
\end{equation*}
we get  a local orthonormal frame field on $\Gamma_{\rho}$ denoted by $\{\hat{e}_{1}\triangleq\tilde{\varsigma}, \hat{e}_{2}, \cdots,\hat{e}_{n}\}$.

Under the local orthonormal frame field $\{\hat{e}_{1}, \hat{e}_{2}, \cdots,\hat{e}_{n}\}$, it induces the second fundamental form ($h_{ij}$)
according to $\Gamma$. Thus
\begin{equation}\label{e4.17}
k(\hat{e}_{1})=h_{11}.
\end{equation}
With the aid of  the representation  in (\ref{e4.11}), we get
\begin{equation}\label{e4.18}
\frac{h_{11}}{\mu_{n+1}}|_{\Gamma_{\rho}\cap\partial\Gamma}\leq \frac{h_{11}(0)}{\mu_{n+1}(0)}.
\end{equation}
Considering $W\triangleq\frac{h_{11}}{\mu_{n+1}}$, we can deduce that $W$ satisfies the following graceful differential inequality
on $\Gamma_{\rho}$ in order to estimate $h_{11}$ on the boundary.
\begin{lemma}\label{lem4.8}
If $u$ is a smooth uniformly convex solution of (\ref{e3.3}).  Then  we have
\begin{equation*}
F_{ij}\nabla_{i}\nabla_{j}W+b_{i}\nabla_{i}W\geq 0, \qquad on \quad\Gamma_{\rho},
\end{equation*}
where $b=(b_{1},b_{2},\cdots,b_{n})$ is a bounded vector field.
\end{lemma}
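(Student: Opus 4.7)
The plan is to derive the differential inequality for $W = h_{11}/\mu_{n+1}$ by combining the intermediate identity underlying Lemma \ref{lem4.3} with Lemma \ref{lem4.7} applied to the upward direction. First, I extract from the proof of Lemma \ref{lem4.3} equation (\ref{e4.7}) with the free index fixed to $l=1$, namely
\[
F_{ij}\nabla_i\nabla_j h_{11}=-F_{ij,rs}\nabla_1 h_{rs}\nabla_1 h_{ij}-F_{ij}h_{im}h_{mj}h_{11}+F_{ij}h_{ij}h_{1m}h_{1m}.
\]
The concavity of $F$ from Lemma \ref{l2.1}(i) renders the first term on the right non-negative, so
\[
F_{ij}\nabla_i\nabla_j h_{11}\ge -F_{ij}h_{im}h_{mj}h_{11}+F_{ij}h_{ij}(h_{1m})^2.
\]
In parallel, Lemma \ref{lem4.7} with $\xi=e_{n+1}$, so that $\langle \mu,\xi\rangle=\mu_{n+1}$, yields
\[
F_{ij}\nabla_i\nabla_j \mu_{n+1}=-F_{ij}h_{ik}h_{jk}\,\mu_{n+1}.
\]

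Next, I expand $\nabla_i\nabla_j W$ via the product rule from $h_{11}=W\mu_{n+1}$, obtaining
\[
F_{ij}\nabla_i\nabla_j W=\frac{F_{ij}\nabla_i\nabla_j h_{11}}{\mu_{n+1}}-\frac{W\,F_{ij}\nabla_i\nabla_j\mu_{n+1}}{\mu_{n+1}}-\frac{2F_{ij}\nabla_i\mu_{n+1}\nabla_j W}{\mu_{n+1}}.
\]
Inserting the two bounds above and using $W\mu_{n+1}=h_{11}$, the cubic-in-$h$ contributions $\pm h_{11}F_{ij}h_{im}h_{mj}/\mu_{n+1}$ cancel exactly, leaving
\[
F_{ij}\nabla_i\nabla_j W\ge \frac{F_{ij}h_{ij}(h_{1m})^2}{\mu_{n+1}}-\frac{2}{\mu_{n+1}}F_{ij}\nabla_i\mu_{n+1}\nabla_j W.
\]
In a frame diagonalizing $[h_{ij}]$, $F_{ij}h_{ij}=\sum_i F_i\kappa_i\ge 0$ by the positivity of $F_i$ and $\kappa_i\ge 0$, so the first term on the right is non-negative. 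Setting $b_j:=\frac{2}{\mu_{n+1}}F_{ij}\nabla_i\mu_{n+1}$ then delivers the desired inequality $F_{ij}\nabla_i\nabla_j W+b_j\nabla_j W\ge 0$.

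The final task is to verify boundedness of $b$. The identity $\nabla_i\mu_{n+1}=-h_{ik}\langle\hat e_k,e_{n+1}\rangle$, already obtained inside the proof of Lemma \ref{lem4.7}, gives in a diagonalizing frame
\[
b_j=-\frac{2}{\mu_{n+1}}\,\frac{\kappa_j}{1+\kappa_j^2}\,\langle\hat e_j,e_{n+1}\rangle.
\]
The factor $\kappa_j/(1+\kappa_j^2)\le \tfrac12$ is bounded unconditionally, and $\mu_{n+1}=1/\sqrt{1+|Du|^2}$ is bounded away from zero by the $C^1$ control inherent in the strict obliqueness estimate (\ref{e3.4}); hence $|b|$ is controlled purely in terms of $\Omega$ and $\tilde\Omega$.

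The main obstacle is precisely this last point. The whole construction rests on the special structural feature of $F=\sum\arctan\kappa_i$ that $F_i\kappa_i=\kappa_i/(1+\kappa_i^2)$ remains bounded even as $\kappa_i$ becomes large; without it, the first-order coefficient would grow with $|D^2u|$ and the lemma could not serve its intended role in the boundary maximum principle argument on $W$ that will reduce the interior second-derivative bound to the boundary.
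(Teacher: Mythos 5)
Your proof is correct and follows essentially the same route as the paper's: the same cancellation of the cubic curvature terms via (\ref{e4.7}) with $l=1$ and Lemma \ref{lem4.7} with $\xi=e_{n+1}$, concavity of $F$, positivity of $F_{ij}h_{ij}$, and the same first-order coefficient $b_i$ with boundedness deduced from $F_i\kappa_i=\kappa_i/(1+\kappa_i^2)\leq 1/2$ and the gradient bound. The only cosmetic difference is that you expand $\nabla_i\nabla_j$ of the product $h_{11}=W\mu_{n+1}$ while the paper applies the quotient rule to $W=h_{11}/\mu_{n+1}$; the resulting identity is the same.
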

\begin{proof}
We directly compute
$$\nabla_{j}W=\frac{\mu_{n+1}\nabla_{j}h_{11}-h_{11}\nabla_{j}\mu_{n+1}}{\mu^{2}_{n+1}},$$
and
\begin{equation*}
 \begin{aligned}
 \nabla_{i}\nabla_{j}W=&\frac{\nabla_{i}\nabla_{j}h_{11}}{\mu_{n+1}}-\frac{h_{11}}{\mu^{2}_{n+1}}\nabla_{i}\nabla_{j}\mu_{n+1}\\
 &-\frac{\nabla_{j}h_{11}\nabla_{i}\mu_{n+1}}{\mu^{2}_{n+1}}-\frac{\nabla_{i}h_{11}\nabla_{j}\mu_{n+1}}{\mu^{2}_{n+1}}
 +\frac{2h_{11}}{\mu^{3}_{n+1}}\nabla_{i}\mu_{n+1}\nabla_{j}\mu_{n+1}\\
=&\frac{\nabla_{i}\nabla_{j}h_{11}}{\mu_{n+1}}-\frac{h_{11}}{\mu^{2}_{n+1}}\nabla_{i}\nabla_{j}\mu_{n+1}
-\frac{\nabla_{i}\mu_{n+1}}{\mu_{n+1}}\nabla_{j}W-\frac{\nabla_{j}\mu_{n+1}}{\mu_{n+1}}\nabla_{i}W.
\end{aligned}
 \end{equation*}
Furthermore, from equation (\ref{e4.13}), we have
\begin{equation}\label{e4.19}
F_{ij}\nabla_{i}\nabla_{j}\mu_{n+1}+F_{ij}h_{ik}h_{jk}\mu_{n+1}=0.
\end{equation}
Therefore, we have
\begin{equation*}
 \begin{aligned}
F_{ij}\nabla_{i}\nabla_{j}W&+F_{ij}\left(\frac{\nabla_{i}\mu_{n+1}}{\mu_{n+1}}\nabla_{j}W
+\frac{\nabla_{j}\mu_{n+1}}{\mu_{n+1}}\nabla_{i}W\right)\\
=&F_{ij}\frac{\nabla_{i}\nabla_{j}h_{11}}{\mu_{n+1}}-F_{ij}\frac{h_{11}}{\mu^{2}_{n+1}}\nabla_{i}\nabla_{j}\mu_{n+1}\\
=&-\frac{1}{\mu_{n+1}}F_{ij,rs}\nabla_{1}h_{rs}\nabla_{1}h_{ij}+\frac{h_{11}}{\mu^{2}_{n+1}}F_{ij}h_{ik}h_{jk}\mu_{n+1}\\
&-\frac{1}{\mu_{n+1}}\left(F_{ij}h_{im}h_{mj}h_{11}-F_{ij}h_{ij}h_{1m}h_{1m}\right)\\
\geq & \frac{1}{\mu_{n+1}}F_{ij}h_{ij}h_{1m}h_{1m}\geq 0,\\
\end{aligned}
 \end{equation*}
by (\ref{e4.7}), (\ref{e4.19}) and the concavity of $F$.
Next, by (\ref{e4.14}) we have
$$b_{i}=2F_{ij}\frac{\nabla_{j}\mu_{n+1}}{\mu_{n+1}}=-2F_{ij}h_{jk}\frac{\langle\hat{e}_{k},e_{n+1}\rangle}{\mu_{n+1}}.$$
 The convexity of $\Gamma$ and (\ref{e2.17}) imply that $[F_{ij}h_{jk}]$ is bounded. Therefore, $b_{i}$ is bounded. The proof of the lemma is finished.
\end{proof}
\begin{lemma}\label{lem4.9}
If $u$ is a smooth uniformly convex solution of (\ref{e1.3}) and (\ref{e1.4}).  Then  we get
\begin{equation*}
0\leq\sup_{x\in\partial\Omega,\varsigma\in T_{x}\partial\Omega, |\varsigma|=1}u_{\varsigma\varsigma}\leq C_{15}.
\end{equation*}
\end{lemma}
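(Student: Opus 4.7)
The lower bound $0\le \sup u_{\varsigma\varsigma}$ follows at once from the convexity of $u$, so the content is the upper bound. My plan is to translate the tangential second derivative into the geometric quantity $W=h_{11}/\mu_{n+1}$ constructed before Lemma \ref{lem4.8}, apply the maximum principle provided by that lemma on $\Gamma_{\rho}$, and then close with a barrier argument in the spirit of Lemma \ref{l3.5} together with the tangential--tangential differentiation of the boundary condition.

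First I would set $M:=\sup_{x\in\partial\Omega,\ \varsigma\in T_x\partial\Omega,\ |\varsigma|=1}u_{\varsigma\varsigma}$ and choose a point $x_0\in\partial\Omega$ and a unit tangent $\varsigma_0$ realising $M$. After rotation I may assume $x_0=0$, $\varsigma_0=e_1$ and that the inward normal to $\partial\Omega$ at $0$ is $e_n$. I use exactly the frame construction of the paragraphs preceding Lemma \ref{lem4.8}: a smooth unit tangential field $\varsigma$ on $\partial\Omega\cap B_\rho(0)$ with $\varsigma(0)=e_1$, extended smoothly to $\overline\Omega\cap\overline{B_\rho(0)}$, lifted to the unit tangent $\tilde\varsigma$ of $\Gamma_\rho$ by \eqref{e4.10}, and completed by Gram--Schmidt to an orthonormal frame $\{\hat e_1=\tilde\varsigma,\hat e_2,\dots,\hat e_n\}$. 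By \eqref{e4.17} and \eqref{e4.11} we then have $W(0)=u_{11}(0)/(1+u_1(0)^2)$, which is comparable to $M$ since $|Du|$ is bounded by $Du(\Omega)=\tilde\Omega$.

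Next, Lemma \ref{lem4.8} gives $F_{ij}\nabla_i\nabla_jW+b_i\nabla_iW\ge0$ with bounded $b$, so the weak maximum principle forces $\sup_{\overline{\Gamma_\rho}}W$ to be attained on $\partial\Gamma_\rho=(\Gamma_\rho\cap\partial\Gamma)\cup(\Gamma\cap(\partial B_\rho(0)\times\mathbb R))$. On the first piece, $\tilde\varsigma$ is tangential to $\partial\Omega$ by construction, so \eqref{e4.18} gives $W\le W(0)$. On the second piece $W\le C|D^2u|$, and combining Lemma \ref{lem4.5}, Lemma \ref{lem4.6} with the bound \eqref{e4.5} from Lemma \ref{lem4.2} yields $W\le C(1+M)$ there. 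Because $W(0)\le\max(W(0),C(1+M))$ is tautological, more delicate work at $x_0$ is needed.

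The main obstacle is this closing step. My plan is to mimic Lemma \ref{l3.5}: introduce on $\Omega_\rho=\Omega\cap B_\rho(0)$ the auxiliary function
$$\Phi(x)=W(x)-W(0)+\sigma\ln\!\bigl(1+k\,h^{\ast}(x)\bigr)+A|x-x_0|^2,$$
use \eqref{e3.15}--\eqref{e3.17} and the bound of the preceding paragraph to choose $\sigma,k,A$ so that $\mathcal L\Phi\le0$ in $\Omega_\rho$ and $\Phi\ge0$ on $\partial\Omega_\rho$. Since $\Phi(x_0)=0$, the maximum principle then gives $\partial_n\Phi(x_0)\ge0$, which unwinds to an upper bound of the form $u_{\beta\varsigma\varsigma}(x_0)\le C(1+M)$. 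On the other hand, differentiating the boundary identity $h(Du)=0$ twice in the tangential field $\varsigma$ at $x_0$ produces
$$h_{p_k}u_{k\varsigma\varsigma}=-h_{p_kp_l}u_{k\varsigma}u_{l\varsigma}\ge \theta\,|D^2u\cdot\varsigma_0|^2\ge \theta M^2,$$
and the strictly oblique estimate $\langle\beta,\nu\rangle\ge1/C_1$ of Lemma \ref{l3.5} lets me isolate $u_{\beta\varsigma\varsigma}(x_0)$ from its tangential companions. Matching the two inequalities forces $\theta M^2\le C(1+M)$, hence $M\le C_{15}$, completing the proof.
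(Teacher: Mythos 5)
Your overall strategy — transfer the tangential second derivative to $W=h_{11}/\mu_{n+1}$, apply Lemma~\ref{lem4.8} together with a barrier in the spirit of Lemma~\ref{l3.5}, and close with the twice-differentiated boundary identity — is exactly the paper's, and the final quadratic-vs-linear estimate $\theta M^{2}\le C(1+M)$ is the right punch line. However, there is a genuine sign problem in your barrier that, as written, makes both steps of the maximum-principle comparison fail.

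You set $\Phi=W-W(0)+\sigma\ln(1+kh^{*})+A|x-x_{0}|^{2}$ and want $\mathcal L\Phi\le0$ and $\Phi\ge0$ on $\partial\Omega_{\rho}$. But Lemma~\ref{lem4.8} gives $\mathcal LW\ge0$, so $\mathcal L\Phi=\mathcal LW+\mathcal L(\text{barrier})$ contains a term whose sign works \emph{against} you; $\mathcal LW$ has no a priori upper bound, so no choice of $\sigma,k,A$ makes $\mathcal L\Phi\le0$. Moreover, even if you could force this, $\Phi\ge0$ in $\Omega_{\rho}$ with $\Phi(x_{0})=0$ would give $\partial_{n}\Phi(x_{0})\ge0$, i.e.\ $\partial_{n}W(x_{0})\ge-\sigma k$: a \emph{lower} bound on $\partial_{n}W$, whereas you need an \emph{upper} bound on the oblique derivative $\partial_{\beta}W$ to match the $\ge\theta M^{2}$ coming from differentiating $h(Du)=0$ twice. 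The quantity $W$ plays the role of a boundary \emph{maximum} (it has $\mathcal LW\ge0$), which is the mirror image of the $\omega$ in Lemma~\ref{l3.5} (a boundary minimum with $\mathcal L\omega\le C\mathcal T_{G}$). The right sign is therefore $W(0)-W$: with $\Psi=W(0)-W+\sigma\ln(1+kh^{*})+A|x-x_{0}|^{2}$, you get $-\mathcal LW\le0$ so that $\mathcal L\Psi\le0$ is achievable, $\Psi\ge0$ on $\partial\Omega\cap B_{\rho}$ follows from $(\ref{e4.18})$, $\Psi\ge0$ on $\Omega\cap\partial B_{\rho}$ is enforced with $A\sim M$ (and then $\sigma\sim M$), and the Hopf argument gives $\partial_{\beta}W(x_{0})\le\sigma k\langle\beta,\nu\rangle\le C(1+M)$, which is the bound you stated but did not actually obtain.

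The paper sidesteps the $M$-dependent barrier constants more elegantly by normalising: it works with $Z=W/W(0)$, so that $Z(0)=1$, $Z\le1$ on $\partial\Omega\cap B_{\rho}$, and $|Z|\le C_{17}$ on $\Omega\cap\partial B_{\rho}$ — all \emph{universal} bounds. It then compares $Z$ with a fixed barrier $\Phi=1+\sigma\ln(1+kh^{*})+A|x|^{2}$ with universal $\sigma,k,A$, and the Hopf argument yields $\partial_{\beta}Z(0)\le C_{18}$, i.e.\ $\partial_{\beta}W(0)\le C_{18}W(0)\sim C_{18}M$. This is equivalent to what your (sign-corrected) argument produces, but the normalisation avoids tracking $M$-dependence in the barrier coefficients. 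I would recommend either reversing the sign to $W(0)-W$ and verifying the boundary inequality $\Psi\ge0$ on $\Omega\cap\partial B_{\rho}$ carefully, or adopting the paper's $Z=W/W(0)$ normalisation, which is cleaner. Also, you should perform the Hopf step in the oblique direction $\beta$ rather than $\nu$; this is what lets $\partial_{\beta}W(0)$ unwind directly to $D_{11\beta}u(0)$ via $u_{\beta\varsigma}(x_{0})=0$, so that no further ``isolation'' step is needed before matching against $h_{p_{k}}D_{11k}u=D_{11\beta}u\ge\theta M^{2}$.
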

\begin{proof}
By the previous assumption, it is enough that we estimate $u_{11}(0)$.
Without loss of generality, we assume that $W(0)\geq 1$. One can set $$Z=\frac{W}{W(0)},$$ and define
$$\mathcal{L}=F_{ij}\nabla_{i}\nabla_{j}+b_{i}\nabla_{i}.$$
By Lemma \ref{lem4.6}, \ref{lem4.8} and using (\ref{e4.18}), we have
\begin{equation}\label{e4.20}
\begin{aligned}
\mathcal{L}Z&\geq -C_{16}\quad  on \quad \Gamma_{\rho}, \\
|Z|&\leq C_{17}\qquad on \quad \partial\Gamma_{\rho}, \\
Z&\leq 1\qquad\quad on \quad \Gamma_{\rho}\cap\partial\Gamma, \\
Z(0)&= 1.
\end{aligned}
\end{equation}
We consider the barrier function
$$\Phi(X)=1+\sigma\ln(1+kh^{\ast}(x))+A|x|^{2}$$
for $X=(x,u(x))\in \Gamma.$
As same as  the details in the  proof of (\ref{e3.18}), there exist constants $\sigma$, $k$, $A$ depending on the known data such that we get
\begin{equation}\label{e4.21}
\begin{aligned}
\mathcal{L}\Phi&\leq -C_{16}\quad on \quad \Gamma_{\rho}, \\
\Phi&\geq C_{17}\qquad on \quad \partial\Gamma_{\rho}, \\
\Phi&\geq 1\qquad\quad on \quad \Gamma_{\rho}\cap\partial\Gamma, \\
\Phi(0)&= 1.
\end{aligned}
\end{equation}
Combining (\ref{e4.20})  and (\ref{e4.21}), the maximum principle implies that
\begin{equation*}
\Phi-\Phi(0)\geq Z-Z(0)\quad on \quad \Gamma_{\rho}.
\end{equation*}
So that we obtain
\begin{equation*}
D_{\beta}Z(0)\leq D_{\beta}\Phi(0)\leq C_{18}.
\end{equation*}
Then
\begin{equation}\label{e4.22}
D_{\beta}W(0)\leq C_{18}W(0).
\end{equation}
Recalling (\ref{e4.11})  and (\ref{e4.17}) we obtain
\begin{equation*}
W=\frac{D_{\varsigma\varsigma}u}{1+|D_{\varsigma}u|^{2}}
\end{equation*}
for a smooth tangent vector field $\varsigma$ on $\partial\Omega$ with $\varsigma(0)=e_{1}$.  From (\ref{e4.22})
a direct computation shows that
\begin{equation*}
\frac{D_{11\beta}u}{1+|D_{1}u|^{2}}-2\frac{D_{11}u}{(1+|D_{1}u|^{2})^{2}}D_{1}uD_{1\beta}u\leq C_{18} D_{11}u.
\end{equation*}
Thus, by (\ref{e4.1}) and the second boundary condition, we obtain
\begin{equation}\label{e4.23}
D_{11\beta}u\leq C_{19} D_{11}u, \quad \mathrm{at}\quad X=(0,u(0))=(0,0).
\end{equation}
On the other hand, differentiating $h(Du)=0$ twice in the direction $e_1$ at $0$, we have
$$h_{p_k}D_{11k}u+h_{p_kp_l}D_{k1}uD_{l1}u=0.$$
Let $M=D_{11}u(0)$. From Definition \ref{d1}  the concavity of $h$ yields that
$$h_{p_k}D_{11k}u=-h_{p_kp_l}D_{k1}uD_{l1}u\geq \theta M^2.$$
Combining it with $h_{p_k}D_{11k}u=D_{11\beta}u$, and using (\ref{e4.23}) we obtain
$$\theta M^2\leq C_{18}M.$$
Then we get the upper bound of $M=D_{11}u(0)$ and thus the desired result follows.
\end{proof}
In terms of Lemma \ref{lem4.6}, \ref{lem4.9}, we see that
\begin{lemma}\label{lem4.10}
If $u$ is a smooth uniformly convex solution of (\ref{e1.3}) and (\ref{e1.4}). Then
\begin{equation}\label{e4.24}
\max_{\bar{\Omega}}|D^2u| \leq C_{20}.
\end{equation}
\end{lemma}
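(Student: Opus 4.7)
The plan is to obtain \eqref{e4.24} as an immediate consequence of two reductions already set up in this section. The first, Lemma~\ref{lem4.6}, asserts
$$\sup_{\Omega}|D^{2}u|\leq C_{14}\Bigl(1+\sup_{x\in\partial\Omega,\,\varsigma\in T_{x}\partial\Omega,\,|\varsigma|=1}u_{\varsigma\varsigma}\Bigr),$$
which reduces the full global $C^{2}$ bound to controlling the purely tangential components of $D^{2}u$ on $\partial\Omega$. The second, Lemma~\ref{lem4.9}, supplies exactly that control: $u_{\varsigma\varsigma}\leq C_{15}$ uniformly over unit tangent directions $\varsigma$ at boundary points.

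First I will feed the estimate from Lemma~\ref{lem4.9} into the right-hand side of Lemma~\ref{lem4.6}, which at once yields
$$\max_{\bar{\Omega}}|D^{2}u|\leq C_{14}(1+C_{15}).$$
Setting $C_{20}:=C_{14}(1+C_{15})$ produces the required bound. Since $C_{14}$ and $C_{15}$ depend only on $\Omega$ and $\tilde{\Omega}$ (through the defining functions $h$, $h^{\ast}$ and the strict obliqueness constant $C_{1}$ from Lemma~\ref{l3.5}), so does $C_{20}$, which matches the intended statement.

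There is no genuine obstacle remaining at this final assembly stage: the real work has already been carried out upstream. Namely, the interior-to-boundary reduction (Lemma~\ref{lem4.5}) rests on the maximum principle applied to the mean curvature of $\Gamma$ via Lemma~\ref{lem4.3}; the normal and oblique directions are absorbed into tangential ones through the decomposition \eqref{e4.4} combined with the $u_{\beta\beta}$-bound of Lemma~\ref{lem4.2} and the strict obliqueness \eqref{e3.4}; and the delicate boundary tangential-tangential estimate of Lemma~\ref{lem4.9} was obtained by barrier comparison applied to $Z=W/W(0)$, followed by differentiating $h(Du)=0$ twice in the direction $e_{1}$ and using the strict concavity $D^{2}h\leq -\theta I$ from Definition~\ref{d1}. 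Thus Lemma~\ref{lem4.10} is essentially a corollary whose proof consists of a one-line chaining of Lemmas~\ref{lem4.6} and~\ref{lem4.9}.
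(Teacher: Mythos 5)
Your proposal is correct and matches the paper exactly: the paper introduces Lemma~\ref{lem4.10} with the single sentence ``In terms of Lemma~\ref{lem4.6}, \ref{lem4.9}, we see that,'' i.e., precisely the one-line chaining you carry out. Nothing further is needed.
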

In the following,  we describe the positive lower bound of $D^{2}u$. For (\ref{e1.3}), in consider of the Legendre transformation of $u$, the function $u^{*}$ satisfies (\ref{e2.20}) and (\ref{e2.21})
where the structure condition $G^{\ast}$ as same as $G$ according to (\ref{e2.22}).
 Repeating the proof of Lemma \ref{lem4.10}, we have
\begin{lemma}\label{lem4.11}
If $u$ is a smooth uniformly convex solution of (\ref{e1.3}) and (\ref{e1.4}). Then the function $u^{*}$ satisfies
\begin{equation*}
\max_{\bar{\tilde{\Omega}}}|D^2u^{*}| \leq C_{21}.
\end{equation*}
\end{lemma}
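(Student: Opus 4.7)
The plan is to replay the entire chain of arguments leading to Lemma \ref{lem4.10} in the dual setting, with $u$, $\Omega$, $h^{\ast}$ replaced throughout by $u^{*}$, $\tilde{\Omega}$, $h$. By (\ref{e2.21aa})--(\ref{e2.22}) the function $u^{*}$ solves
\[
F^{\ast}[a^{\ast}_{ij}]=\frac{n\pi}{2}-c\quad\text{in }\tilde{\Omega},\qquad Du^{*}(\tilde{\Omega})=\Omega,
\]
so the boundary condition reads $h^{\ast}(Du^{*})=0$ on $\partial\tilde{\Omega}$. Since $[a^{\ast}_{ij}]=[a_{ij}]^{-1}$, Lemma \ref{l3.1} forces the eigenvalues $\mu_{1},\ldots,\mu_{n}$ of $[a^{\ast}_{ij}]$ into $\Gamma^{+}_{]M_{1},M_{2}[}$, so Lemma \ref{l2.1}(iii) applies to $F^{\ast}=F$ with the same universal $\Lambda_{1},\Lambda_{2}$; moreover the strict obliqueness $\langle\tilde{\beta},\tilde{\nu}\rangle\ge 1/C_{1}$ on $\partial\tilde{\Omega}$ has already been established inside the proof of Lemma \ref{l3.5}.

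I would then carry out three dualized steps in order. First, an analogue of Lemma \ref{lem4.2} with $\mathcal{\tilde{L}}=G^{\ast}_{ij}\partial_{ij}+G^{\ast}_{y_{k}}\partial_{k}$ applied to $h^{\ast}(Du^{*})$, against a barrier $\sigma\ln(1+\kappa h(y))$ built from the defining function $h$ of $\tilde{\Omega}$, yields $0\le\sup_{\partial\tilde{\Omega}}u^{*}_{\tilde{\beta}\tilde{\beta}}\le C$. Second, Lemmas \ref{lem4.3}--\ref{lem4.8} are intrinsic identities on the convex graph that depend only on the concavity of $F$, the bound (\ref{e2.17}), and positivity of the principal curvatures; since $F^{\ast}=F$ and the graph of $u^{*}$ is uniformly convex, each transfers verbatim to the dual graph and produces the reduction
\[
\sup_{\tilde{\Omega}}|D^{2}u^{*}|\le C\Bigl(1+\sup_{y\in\partial\tilde{\Omega},\,\varsigma\in T_{y}\partial\tilde{\Omega},\,|\varsigma|=1}u^{*}_{\varsigma\varsigma}\Bigr).
\]
Third, I replay Lemma \ref{lem4.9}: construct the orthonormal frame on the dual graph, introduce $W=h_{11}/\mu_{n+1}$, compare it against the barrier $1+\sigma\ln(1+k h(y))+A|y|^{2}$, and obtain $D_{11\tilde{\beta}}u^{*}\le C\,D_{11}u^{*}$ at the boundary point realizing the supremum.

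The main obstacle is the concluding quadratic step of the dualized Lemma \ref{lem4.9}. After the inequality above, one differentiates the boundary condition $h^{\ast}(Du^{*})=0$ twice in the tangential direction $e_{1}$ to obtain $h^{\ast}_{p_{k}}D_{11k}u^{*}=-h^{\ast}_{p_{k}p_{l}}D_{k1}u^{*}D_{l1}u^{*}$, and must convert this into a quadratic inequality for $M=D_{11}u^{*}(y_{0})$. The conversion relies on the strict concavity $D^{2}h^{\ast}\le -\tilde{\theta}I$ of the defining function of the image domain $\Omega$, which is the property encoded in Definition \ref{d1} for $h^{\ast}$; combined with $h^{\ast}_{p_{k}}D_{11k}u^{*}=D_{11\tilde{\beta}}u^{*}$ it gives $\tilde{\theta}M^{2}\le CM$, hence an upper bound on $M$. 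The analogue of Lemma \ref{lem4.10} then delivers $\max_{\bar{\tilde{\Omega}}}|D^{2}u^{*}|\le C_{21}$.
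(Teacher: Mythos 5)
Your proposal is correct and follows the same route as the paper, which simply invokes the observation preceding the lemma — that $u^{*}$ solves (\ref{e2.20})--(\ref{e2.21}) with $G^{\ast}=G$ because $F^{\ast}=F$ — and then says "Repeating the proof of Lemma \ref{lem4.10}". Your sketch is in fact a more careful spelled-out version of the same dualization: you correctly identify the dual boundary condition $h^{\ast}(Du^{*})=0$, observe that Lemma \ref{l3.1} and $[a^{\ast}_{ij}]=[a_{ij}]^{-1}$ keep the dual eigenvalues in $\Gamma^{+}_{]M_{1},M_{2}[}$, note that the strict obliqueness bound for $\tilde{\beta}$ was already obtained inside Lemma \ref{l3.5}, and pin down that the final quadratic step in the dualized Lemma \ref{lem4.9} is driven by the uniform concavity $D^{2}h^{\ast}\le-\tilde{\theta}I$ of the defining function of $\Omega$, which Definition \ref{d1} provides.
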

By Lemma \ref{lem4.10} and Lemma \ref{lem4.11}, we conclude that
\begin{lemma}\label{lem4.12}
If $u$ is a smooth uniformly convex solution of (\ref{e1.3}) and (\ref{e1.4}). Then
\begin{equation*}
\frac{1}{ C_{22}}I\leq D^2 u(x) \leq  C_{22} I,\ \ x\in\bar\Omega,
\end{equation*}
where $I$ is the $n\times n$ identity matrix.
\end{lemma}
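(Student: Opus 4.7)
The plan is to combine Lemma \ref{lem4.10} and Lemma \ref{lem4.11} through the duality provided by the Legendre transformation. The upper bound $D^2u(x)\leq C_{22}I$ is essentially immediate: Lemma \ref{lem4.10} already asserts $\max_{\bar\Omega}|D^2u|\leq C_{20}$, and since $u$ is convex, $D^2u\geq 0$, so every eigenvalue $\lambda_i$ of $D^2u$ satisfies $0\leq\lambda_i\leq C_{20}$, giving the right-hand inequality with any constant $C_{22}\geq C_{20}$.

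The nontrivial direction is the positive lower bound $D^2u(x)\geq \frac{1}{C_{22}}I$. Here I would invoke the Legendre transform $u^\ast$ introduced in Section 2. Recall the identity
\begin{equation*}
\frac{\partial^2 u^\ast}{\partial y_i\,\partial y_j}(y) = u^{ij}(x), \qquad y=Du(x),
\end{equation*}
so that $[D^2u^\ast(y)] = [D^2u(x)]^{-1}$ as symmetric matrices. Lemma \ref{lem4.11} provides the bound $\max_{\bar{\tilde\Omega}}|D^2u^\ast|\leq C_{21}$, and since $u^\ast$ is also convex (its Hessian is the inverse of the positive definite matrix $D^2u$), its eigenvalues $\mu_i=1/\lambda_i$ satisfy $0<\mu_i\leq C_{21}$. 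Inverting, we obtain $\lambda_i\geq 1/C_{21}$ for every eigenvalue of $D^2u(x)$, which is exactly the desired positive lower bound.

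Putting the two pieces together with $C_{22}:=\max\{C_{20},C_{21}\}$ yields
\begin{equation*}
\frac{1}{C_{22}}I \leq D^2u(x) \leq C_{22}I, \qquad x\in\bar\Omega.
\end{equation*}

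There is no substantial obstacle: all the analytic work has already been carried out in Lemmas \ref{lem4.10} and \ref{lem4.11}, and in the derivation of the Legendre-transform formulas in Section 2. The only point worth verifying explicitly is that the duality $[D^2u^\ast]=[D^2u]^{-1}$ holds pointwise on the diffeomorphism $Du:\Omega\to\tilde\Omega$, which is ensured by the uniform convexity of $u$ together with the second boundary condition $Du(\Omega)=\tilde\Omega$; the bound from Lemma \ref{lem4.11} then transports back to $\Omega$ via the inverse map $Du^\ast:\tilde\Omega\to\Omega$ given in (\ref{e2.21}).
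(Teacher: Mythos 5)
Your proof is correct and follows exactly the route the paper intends: the paper states Lemma \ref{lem4.12} with only the preamble ``By Lemma \ref{lem4.10} and Lemma \ref{lem4.11}, we conclude that,'' and the implicit argument is precisely the Legendre-transform duality $[D^2u^\ast]=[D^2u]^{-1}$ you spell out. No difference in approach, just a fuller write-up of the same step.
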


\section{Proof of Theorem \ref{t1.1}}

The goal of this section is to prove Theorem \ref{t1.1}. There are two key lemmas when we start the proof. Though the discussion on the uniqueness in the fifth section of \cite{SM}, we conclude that
\begin{lemma}\label{yinli2.4.1}
If  $u\in C^{\infty}(\bar{\Omega})$ are uniformly convex solutions of (\ref{e1.3}) and (\ref{e1.4}), then $u$ is unique up to a constant.
\end{lemma}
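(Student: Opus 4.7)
The plan is to adapt the uniqueness argument from Section 5 of \cite{SM} to the special Lagrangian curvature setting, exploiting that the equation $G(Du, D^2 u) = c$ in (\ref{e2.10}) is elliptic along the segment between any two uniformly convex solutions, combined with the strict obliqueness proved in Lemma \ref{l3.5}. Let $u_1, u_2 \in C^{\infty}(\bar\Omega)$ be two uniformly convex solutions with corresponding constants $c_1, c_2$ that could a priori differ. I would set $u_t = t u_1 + (1-t) u_2$ for $t \in [0,1]$ and $w = u_1 - u_2$; since $D^2 u_t > 0$ stays uniformly positive definite, the matrix $A(Du_t, D^2 u_t)$ in (\ref{e2.8}) has eigenvalues in $\Gamma^+_n$ and $G$ is smooth and uniformly elliptic along the path.

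The fundamental theorem of calculus then produces
$$
c_1 - c_2 \;=\; \int_0^1 \frac{d}{dt} G(Du_t, D^2 u_t)\,dt \;=\; a^{ij}(x)\, w_{ij} + b^i(x)\, w_i,
$$
with $a^{ij}(x) = \int_0^1 G_{ij}(Du_t, D^2 u_t)\,dt$ positive definite and $b^i(x) = \int_0^1 G_i(Du_t, D^2 u_t)\,dt$ bounded (by the remark after (\ref{e2.17})). On $\partial\Omega$ the identity $h(Du_1) = h(Du_2) = 0$ yields $\tilde\beta^k(x)\, w_k = 0$ with $\tilde\beta^k(x) = \int_0^1 h_{p_k}(Du_t)\,dt$. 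Since $\tilde\Omega$ is convex and $Du_1(x), Du_2(x) \in \partial\tilde\Omega$, the segment $Du_t(x)$ stays in $\overline{\tilde\Omega}$, so Lemma \ref{l3.3} gives $h_{p_k}(Du_t)\nu_k \ge 0$, while at $t = 0, 1$ Lemma \ref{l3.5} supplies $h_{p_k}(Du_i)\nu_k \ge 1/C_1$. By continuity this persists on a uniform $t$-neighborhood of the endpoints, so $\langle \tilde\beta, \nu\rangle \ge 1/C > 0$ uniformly on $\partial\Omega$.

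This reduces matters to the linear oblique boundary value problem
$$
a^{ij} w_{ij} + b^i w_i = c_1 - c_2 \text{ in } \Omega, \qquad \tilde\beta^k w_k = 0 \text{ on } \partial\Omega.
$$
Suppose first $c_1 > c_2$: the strong maximum principle precludes an interior maximum of $w$, and at a boundary maximum Hopf's lemma forces the derivative in the inward normal direction $\nu$ to be strictly negative. But the tangential derivatives of $w$ vanish there, so the oblique condition reduces to $\langle \tilde\beta, \nu\rangle\, \partial_\nu w = 0$, contradicting strict obliqueness. Symmetry rules out $c_1 < c_2$, so $c_1 = c_2$. With $c_1 = c_2$ the PDE becomes homogeneous, and the same Hopf-plus-obliqueness contradiction, applied to both $w$ and $-w$, forces $w$ to be constant, i.e.\ $u_1 - u_2 \equiv$ const on $\bar\Omega$.

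The main obstacle will be the verification that the averaged boundary vector $\tilde\beta$ is strictly oblique: once this is in hand everything reduces to routine linear-elliptic theory. It requires combining the convexity of $\tilde\Omega$ (for Urbas's nonnegativity from Lemma \ref{l3.3} along the full segment in $\overline{\tilde\Omega}$) with the endpoint strict obliqueness from Lemma \ref{l3.5} and a continuity argument in $t$. A secondary technical point is to check that $G$ remains uniformly elliptic along the path $u_t$, which follows from the uniform two-sided Hessian bounds in Lemma \ref{lem4.12} applied to $u_1$ and $u_2$, keeping the eigenvalues of $A(Du_t, D^2 u_t)$ uniformly inside $\Gamma^+_n$.
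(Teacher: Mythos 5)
Your overall strategy is the right one and is essentially the standard Urbas/Brendle--Warren uniqueness argument that the paper simply cites (``through the discussion on the uniqueness in the fifth section of \cite{SM}''): linearize along the segment $u_t$, reduce to a linear oblique problem for $w=u_1-u_2$, and combine the strong maximum principle with Hopf's lemma at a boundary extremum. However, the step you single out as the main obstacle --- the uniform strict obliqueness $\langle\tilde\beta,\nu\rangle\ge 1/C>0$ --- has a genuine gap, and in fact this claim cannot be established the way you propose.

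You invoke Lemma~\ref{l3.3} for $u_t$ on the grounds that $D^2u_t\ge 0$ and $Du_t(x)\in\overline{\tilde\Omega}$. But Lemma~\ref{l3.3}, read correctly, is a statement about functions satisfying the \emph{second boundary condition} $h(Du)=0$ on $\partial\Omega$; it is false for a general convex $u$ with $Du(\bar\Omega)\subset\overline{\tilde\Omega}$. (Take $\Omega=\tilde\Omega=B_1$, $h(p)=(1-|p|^2)/2$, and $u(x)=x_1^2/4-\epsilon x_2+\delta|x|^2$ with $0<2\delta<\epsilon$ small: then $D^2u>0$, $Du(\bar\Omega)\subset\tilde\Omega$, yet $h_{p_k}(Du)\nu_k=2\delta-\epsilon<0$ at $x_0=(0,1,0,\dots,0)$.) The interpolants $u_t$ for $0<t<1$ satisfy $h(Du_t)\ge 0$ on $\partial\Omega$, not $h(Du_t)=0$, so Lemma~\ref{l3.3} does not apply. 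Worse, the claim $\langle\tilde\beta,\nu\rangle>0$ on all of $\partial\Omega$ is logically equivalent to the uniqueness you are trying to prove: if $w$ were nonconstant with boundary maximum at $x_0$ and $\partial_\nu w(x_0)\ne 0$, then from $\tilde\beta\cdot Dw=0$ and $Dw(x_0)=(\partial_\nu w(x_0))\nu(x_0)$ one gets $\tilde\beta(x_0)\cdot\nu(x_0)=0$ exactly. So the averaged vector is \emph{not} uniformly oblique in the hypothetical non-uniqueness scenario, and trying to establish this first is circular.

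The correct way to close the argument at the extremal boundary point $x_0$ avoids $\tilde\beta$ entirely and uses only the endpoint obliqueness from Lemma~\ref{l3.5} together with the concavity of $h$. Set $\psi(t)=h\bigl(Du_t(x_0)\bigr)$ for $t\in[0,1]$. Since $Du_1(x_0),Du_2(x_0)\in\partial\tilde\Omega$ and $\tilde\Omega$ is convex, $Du_t(x_0)\in\overline{\tilde\Omega}$, so $\psi\ge 0$; also $\psi(0)=\psi(1)=0$ by the boundary condition, and $\psi$ is concave because $h$ is concave and $t\mapsto Du_t(x_0)$ is affine. Hence $\psi'(0)\ge 0$. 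On the other hand,
\begin{equation*}
\psi'(0)=h_{p_k}\bigl(Du_2(x_0)\bigr)\,w_k(x_0)
=\bigl(\partial_\nu w(x_0)\bigr)\,h_{p_k}\bigl(Du_2(x_0)\bigr)\nu_k(x_0),
\end{equation*}
because the tangential derivatives of $w$ vanish at a boundary maximum. Hopf's lemma gives $\partial_\nu w(x_0)<0$ (with $\nu$ inward), and Lemma~\ref{l3.5} applied to $u_2$ gives $h_{p_k}(Du_2(x_0))\nu_k(x_0)\ge 1/C_1>0$, so $\psi'(0)<0$. This contradiction shows $w$ is constant; the same computation with $c_1>c_2$ (where the inhomogeneous term makes $w$ nonconstant and keeps Hopf's inequality strict) shows $c_1\le c_2$, and by symmetry $c_1=c_2$. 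The rest of your write-up (FTC linearization, positive definiteness of $a^{ij}$ and boundedness of $b^i$ from the Hessian bounds of Lemma~\ref{lem4.12}) is correct and needs no change.
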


Next, by Lemma 5.2 in \cite{CHB}, we obtain
\begin{lemma}\label{yinli2.4.2}
If  $u\in C^{2}(\bar{\Omega})$ is a uniformly convex solution of (\ref{e1.3}) and (\ref{e1.4}), then $u\in C^{\infty}(\bar{\Omega})$.
\end{lemma}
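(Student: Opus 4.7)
The plan is to run the standard regularity bootstrap for uniformly elliptic concave fully nonlinear equations equipped with a smooth oblique boundary condition. Once $u$ is known to be $C^2$ and uniformly convex, (\ref{e1.3}) becomes uniformly elliptic and concave in $D^2u$ and (\ref{e1.4}) reduces to a strictly oblique condition $h(Du)=0$. The classical interior and boundary $C^{2,\alpha}$ estimates for such problems, followed by a linear Schauder bootstrap, then deliver $u\in C^\infty(\bar\Omega)$.

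First I would verify the structural hypotheses at the $C^2$ level. Because $u\in C^2(\bar\Omega)$ is uniformly convex and $\bar\Omega$ is compact, $D^2u$ takes values in a compact subset of the positive cone of symmetric matrices; by (\ref{e2.7})--(\ref{e2.8}) the principal curvatures $(\kappa_1,\dots,\kappa_n)$ of $\Gamma$ lie in a compact subset of $\Gamma^+_n$, and Lemma \ref{l2.1} supplies the smoothness, concavity and uniform ellipticity of $F$ on that set. The boundary condition is $h(Du)=0$ with $h$ smooth and uniformly concave in $p$ (Definition \ref{d1}), and strict obliqueness $\langle\beta,\nu\rangle>0$ on $\partial\Omega$ is already a purely $C^2$ statement: differentiating $h(Du)=0$ along any tangent $\tau$ to $\partial\Omega$ gives $h_p(Du)\cdot (D^2u)\tau=0$, so $(D^2u)\,h_p(Du)$ is parallel to $\nu$; combined with $D^2u>0$ and the fact that $h_p(Du)$ is a positive multiple of the inward normal to $\partial\tilde\Omega$ at $Du(x)$, this yields $\langle h_p,\nu\rangle>0$.

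Next I would upgrade $u$ to $C^{2,\alpha}(\bar\Omega)$ for some $\alpha\in(0,1)$. Interior $C^{2,\alpha}_{\mathrm{loc}}$ regularity is immediate from Evans--Krylov theory for concave uniformly elliptic equations. At the boundary, I would invoke a Lieberman--Trudinger type global $C^{2,\alpha}$ estimate for concave uniformly elliptic fully nonlinear equations under smooth concave strictly oblique boundary conditions, whose hypotheses are precisely the concavity of $F$ in $D^2u$ (Lemma \ref{l2.1}), the concavity of $h$ in $p$ (Definition \ref{d1}), and the strict obliqueness just established.

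Finally I would bootstrap to $C^\infty(\bar\Omega)$ via linear Schauder theory. Differentiating (\ref{e1.3}) in the direction $x_k$, the function $w=u_k$ satisfies a linear uniformly elliptic equation whose coefficients depend smoothly on $(Du,D^2u)$ and are therefore in $C^\alpha(\bar\Omega)$; differentiating the boundary condition gives $h_{p_j}(Du)\,w_j=0$ on $\partial\Omega$, a linear strictly oblique boundary condition with $C^\alpha$ coefficients. Schauder theory for linear oblique problems then gives $w\in C^{2,\alpha}(\bar\Omega)$, i.e.\ $u\in C^{3,\alpha}(\bar\Omega)$; iterating yields $u\in C^{k,\alpha}(\bar\Omega)$ for every $k$, hence $u\in C^\infty(\bar\Omega)$. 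The main obstacle in this scheme is the global boundary $C^{2,\alpha}$ estimate: the interior Evans--Krylov step is classical, but the boundary step requires the concave--oblique structural package to be matched carefully to the hypotheses of the quoted regularity theorem; once $u\in C^{2,\alpha}(\bar\Omega)$ is in hand, the Schauder bootstrap is entirely routine.
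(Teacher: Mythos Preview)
Your proposal is correct and follows the standard regularity bootstrap (uniform ellipticity and concavity of $G$ in $D^2u$, strict obliqueness of $h(Du)=0$, boundary $C^{2,\alpha}$ via Lieberman--Trudinger type estimates, then linear Schauder iteration). The paper itself does not give an independent proof of this lemma at all: it simply invokes Lemma~5.2 of \cite{CHB}, which carries out exactly this scheme for the closely related Lagrangian mean curvature equation, so your argument is precisely the content hidden behind that citation.
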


By the continuity method, we now show the following \\
\noindent{\bf Proof of Theorem \ref{t1.1}.}
For each $t\in [0,1]$, set
$$G^{t}(Du, D^{2}u)=tG(Du, D^{2}u)+(1-t)\sum_{i=1}^{n}\arctan \lambda_{i}(D^{2}u).$$
Define
$$\hat{F}=\sum_{i=1}^{n}\arctan \lambda_{i}(D^{2}u).$$
Note that since $u_{kl}=\upsilon b_{ki}a_{ij}b_{jl}$, then
$$\hat{F}_{ij}=\frac{\partial\hat{F}}{\partial a_{ij}}=\frac{\partial\hat{F}}{\partial u_{kl}}\frac{\partial u_{kl}}{\partial a_{ij}}=\upsilon b_{ki}\frac{\partial\hat{F}}{\partial u_{kl}}b_{jl}$$
and
$$\hat{F}_{ij,rs}=\frac{\partial^{2}\hat{F}}{\partial a_{ij}\partial a_{rs}}=\upsilon^{2} b_{ki}b_{pr}\frac{\partial^{2}\hat{F}}{\partial u_{kl}\partial u_{pq}}b_{qs}b_{jl}.$$
By the virtue of $b_{ij}$ being positive matrix, we see that the structure conditions operator $G_{t}$ is similar to $G$ as we discussed in section 2 for uniformly $t\in [0,1]$.
One can  consider
\begin{equation}\label{e5.1}
\left\{ \begin{aligned}G^{t}(Du, D^{2}u)&=c(t), \ \ x\in \Omega, \\
Du(\Omega)&=\tilde{\Omega},
\end{aligned} \right.
\end{equation}
which is equivalent to
\begin{equation}\label{e5.2}
\left\{ \begin{aligned}G^{t}(Du, D^{2}u)&=c(t), \ \ &&x\in \Omega, \\
h(Du)&=0,\ \ &&x\in\partial \Omega.
\end{aligned} \right.
\end{equation}
By Brendle-Warren's Theorem in \cite{SM} , (\ref{e5.1}) is solvable if $t=0$. According to Lemma \ref{yinli2.4.2}, we define the closed subset
$$\mathfrak{B_{1}}:=\{u\in C^{2,\alpha}(\bar{\Omega}): \int_{\Omega}u=0\}$$
in $C^{2,\alpha}(\bar{\Omega})$ and
$$\mathfrak{B_{2}}:= C^{\alpha}(\bar{\Omega})\times C^{1,\alpha}(\partial\Omega).$$
Moreover, we define a map from $\mathfrak{B_{1}}\times \mathbb{R}$ to $\mathfrak{B_{2}}$ by
$$\mathfrak{F}^{t}:=(G^{t}(Du, D^{2}u)-c(t),h(Du)).$$
Then the linearized operator $D\mathfrak{F}^{t}_{(u,c)}:\mathfrak{B_{1}}\times \mathbb{R}\rightarrow \mathfrak{B_{2}}$ is given by
$$D\mathfrak{F}^{t}_{(u,c)}(w,a)=G^{t}_{ij}(Du, D^{2}u)\partial_{ij}w-G^{t}_{p_{i}}(Du, D^{2}u)\partial_{i}w-a, h_{p_i}(Du)\partial_i w).$$
Following the same proof as Proposition 3.1 in \cite{SM}, we obtain that $D\mathfrak{F}^{t}_{(u,c)}$ is invertible for any $(u,c)$ satisfying (\ref{e5.1}) and $t\in[0,1]$.

Write
$$I:=\left\{t\in[0,1]:\text{(\ref{e4.1}) has at least one convex solution}\right\}.$$
Since $0\in I$,  $I$ is not empty. We claim that $I=[0,1]$, which is equivalent to the fact that $I$ is not only open, but also closed. It follows from Proposition 3.1 in \cite{SM} again and Theorem 17.6 in \cite{GT} that $I$ is open. So we only need to prove that $I$ is a closed subset of $[0,1]$.

That $I$ is closed is equivalent to the fact that for any sequence $\{t_k\}\subset I$, if $\lim_{k\rightarrow \infty}t_k=t_0$, then $t_0\in I$. For $t_k$, denote $(u_k, c(t_k))$ solving
\begin{equation*}
\left\{ \begin{aligned}G^{t_{k}}(Du_{k}, D^{2}u_{k})&=c(t_k), \ \ x\in \Omega, \\
Du_k(\Omega)&=\tilde{\Omega}.
\end{aligned} \right.
\end{equation*}
It follows from Lemma \ref{lem4.12} and the proof of Lemma 5.2 in \cite{CHB} that $\|u_k\|_{C^{2,\alpha}(\bar{\Omega})}\leq C$, where $C$ is independent of $t_k$. Since
$$|c(t)|=|G^{t}(Du, D^{2}u)|\leq n\pi,$$
by Arzela-Ascoli Theorem we know that there exists $\hat{u}\in C^{2,\alpha}(\bar{\Omega})$, $\hat{c}\in\mathbb{R}$ and a subsequence of $\{t_k\}$, which is still denoted as $\{t_k\}$, such that letting $k\rightarrow \infty$,
\begin{equation*}
\left\{ \begin{aligned}
&\left\|u_k-\hat{u}\right\|_{C^{2}(\bar{\Omega})}\rightarrow 0, \\
&c(t_k)\rightarrow \hat{c}.
\end{aligned} \right.
\end{equation*}
Since $(u_k, c(t_k))$ satisfies
\begin{equation*}
\left\{ \begin{aligned}G^{t_{k}}(Du_{k}, D^{2}u_{k})&=c(t_k), \ \ &&x\in \Omega, \\
h(Du_k)&=0,\ \ &&x\in\partial \Omega,
\end{aligned} \right.
\end{equation*}
letting $k\rightarrow \infty$, we arrive at
\begin{equation*}
\left\{ \begin{aligned}G^{t_{0}}(D\hat{u}, D^{2}\hat{u})&=\hat{c}, \ \ &&x\in \Omega, \\
h(D\hat{u})&=0,\ \ &&x\in\partial \Omega.
\end{aligned} \right.
\end{equation*}
Therefore, $t_0\in I$, and thus $I$ is closed. Consequently, $I=[0,1]$. By Lemma \ref{yinli2.4.1} we know that the solution of (\ref{e5.1}) is unique up to a constant.

Then we complete the proof of Theorem \ref{t1.1}.
\qed

{\bf Acknowledgments:} The authors thank Professor Qianzhong Ou and Dr.Nianhong Zhou for helpful discussions and suggestions related to this work. The authors also would like to thank  referees for useful comments, which improve the paper.

\end{document}